\documentclass[11pt]{article}
\usepackage{latexsym}
\usepackage{amssymb}
\usepackage{amsmath}
\usepackage[all]{xy}
\usepackage{amsthm}
\usepackage{color}

\usepackage{authblk}

%\usepackage[backend=bibtex]{biblatex}
%\bibliography{Marcelo-Edson-Julia}

%\renewcommand{\textwidth}{154mm}

\newtheorem{teo}{Theorem}[section]
\newtheorem{prop}[teo]{Proposition}
\newtheorem{lemma}[teo] {Lemma}
\newtheorem{coro}[teo]{Corollary}

\theoremstyle{definition}
\newtheorem{example}{Example}
\newtheorem{remark}{Remark}
\newtheorem{definition}{Definition}

\def\Hom{\mathop{\mathrm{Hom}}\nolimits}

\def\Ext{\mathop{\mathrm{Ext}}\nolimits}

\def\Ker{\mathop{\mathrm{Ker}}\nolimits}

\def\Kpar{\mathop{\mathrm{K}_{\mathrm{par}}}\nolimits}

\def\ParRep{\mathop{\mathrm{ParRep}}\nolimits}
\def\ParAct{\mathop{\mathrm{ParAct}}\nolimits}
\def\CovPair{\mathop{\mathrm{CovPair}}\nolimits}
\def\Rep{\mathop{\mathrm{Rep}}\nolimits}

\def\Der_par{\mathop{\mathrm{Der}_{\mathrm{par}}}\nolimits}
\def\Int_par{\mathop{\mathrm{Int}_{\mathrm{par}}}\nolimits}

\def \End { \mathop{ \mathrm{End} } \nolimits}

\def\H {\mathop{\mathrm{H}}\nolimits}
\def \id {\mathop{\mathrm{id}} \nolimits}

\def\N{{\mathbb N}}

\def \Z{{\mathbb Z}}

\def\C{{\mathbb C}}
\def\F{{\mathbb F}}

\def\gA {\tilde{A}}

\begin{document}

\title{Cohomology of partial smash products}
\date{}
\author[1]{Edson Ribeiro Alvares}
\author[1]{Marcelo Muniz Alves}
\author[2]{Mar\'\i a Julia Redondo \thanks{The first and second authors acknowledge support by CNPq, Brazil, projects 305882/2013-9 and 457940/2013-1, by AUGM and by the Math Department of UFPR.
The third author is a researcher from CONICET (Argentina) and she acknowledges support by the project PICT 2011-1510, ANPCyT.}}

\affil[1]{Centro Polit\'ecnico, Departamento de Matem\'atica, Universidade Federal do Paran\'a, CP019081, Jardim das Am\'ericas, Curitiba, PR 81531-980, Brazil}
\affil[2]{Instituto de Matem\'atica (INMABB), Departamento de Matem\'atica, Universidad Nacional del Sur (UNS)-CONICET, Bah\'\i a Blanca, Argentina}

\setcounter{Maxaffil}{0}
\renewcommand\Affilfont{\itshape\small}

\maketitle

\begin{abstract}
We define the partial group cohomology as the right derived functor of 
the functor of partial invariants, 
we relate this cohomology with partial derivations and with the partial 
augmentation ideal and we show that there exists a Grothendieck spectral
sequence relating cohomology of partial smash products 
with partial group cohomology and algebra cohomology.
\end{abstract}

\noindent 2010 MSC: 18G60, 16S35.

\section{Introduction}

The concept of partial group actions and representations was introduced in \cite{E} and \cite{QR},
motivated by the desire to study algebras generated by partial isometries on a Hilbert space $H$. 
More specifically, the initial motivation for introducing partial group actions in \cite{E} 
was to study a certain $\Z$-graded algebra as a smash product with respect to a weaker form of $\Z$-action. 
This construction led to the concept of partial $G$-action on an algebra $A$, which consists of a family of ideals
$\{D_g\}_{g \in G}$ of $A$ and a family of algebra isomorphisms $\alpha_g : D_{g^{-1}} \to D_g$ satisfying some compatibilities. 
The associated partial skew group algebra $A \times_\alpha G$ is the $k$-vector space $\oplus_{g \in G}D_g$ endowed 
with a multiplication that resembles the one that defines a skew group algebra, and coincides with it when $D_g=A$ for every $g$ in $G$.
Partial representations of $G$ appear naturally as an ingredient in the study of the representations of the 
partial skew group algebra $A \times_\alpha G$, see \cite{DE}.

In \cite{DES08} the authors expand the concept of 
partial smash product to that of a partial crossed product, 
with cocycles taking values in multiplier algebras, 
and this approach culminated in a characterization of the $G$-graded algebras which 
are isomorphic to a partial crossed product.
On the other hand, recently it was proved that a large class of $\Z$-graded algebras, the Leavitt path algebras of 
graphs \cite{AAP}, 
can be expressed
as  partial smash products \cite{Goncalves} over the free group generated by the arrows of the underlying quiver.
Among other developments, we may cite also the development of a Galois theory for partial actions \cite{DFP07,BP12,KS14,KS16}. 

%%%%%%%%%%%%%%%%%%%%

Given an action of $G$ on an algebra $B$, every unital ideal of $B$ carries a partial action: if $A$ is such an ideal, with unit $1_A$, then 
a partial $G$-action on $A$ is obtained by defining $D_g$ as the ideal $A \cap g(A)$ and $\alpha_g$ to be the restriction of the map $b \in B \mapsto g(b) \in B$ to the ideal $D_{g^{-1}}$. 
If a partial action arises in this manner, one says that this partial action is \textit{globalizable}, and its globalization is the 
subalgebra $\gA = \oplus_{g \in G} g(A)$. 
It is well-known that if $\gA$ is a (unital) globalization for 
$A$ then the partial smash product $A \times_\alpha G$
and the skew group algebra $\gA [G]$ are Morita equivalent \cite{DE}. 
Therefore, since Hochschild cohomology is a Morita invariant, in principle 
one could substitute $\gA[G]$ for $A \times_\alpha G$ in order to calculate the cohomology of the former. 
However there is a downside to this approach: the globalization $\gA$ may not be a unital algebra, and 
the way that $\gA$ is usually obtained, as the subalgebra generated by vector subspaces of an algebra of functions, 
makes it  hard to describe it explicitly (e.g., by generators and relations). 
Therefore one needs tools to calculate the Hochschild cohomology of $A \times_\alpha G$ that do not involve the globalization $\gA$, and here lies the main contribution of this work.

In Section~2 we recall, to the benefit of the reader, some definitions and fundamental known results
regarding partial actions and partial representations of a group. Here we recall
the definition of partial representation, and show that the category of partial 
representations $\ParRep G$ is equivalent to the category of representations of 
the partial group algebra $\Kpar G$, see \cite{DE}. We also recall the definition 
of partial action of $G$ on an algebra $A$, we recall the construction of the partial smash product 
$A \times_\alpha G$ and we show that the category of representations of the partial smash product 
$A \times_\alpha G$ is equivalent to the category of covariant pairs $\CovPair (A,G)$ whose objects 
are pairs in $\Rep A \times \ParRep G$ with some compatibility property. 
Finally we show that the partial group algebra $\Kpar G$ is in fact a partial smash 
algebra $B \times_\beta G$, see \cite[Thm 6.9]{DE}. 

In Section~3 we define the partial group cohomology as the right derived functor of the functor of partial invariants. As a first step we show that the functor of partial invariants is representable, that is, $(-)^{G_{\mathrm{par}}} \simeq \Hom_{\Kpar G} (B, -)$. Later we relate this cohomology with partial derivations and with the partial augmentation ideal.

In Section~4 we show that there exists a Grothendieck spectral sequence relating cohomology of partial smash products with partial group cohomology and algebra cohomology.

\section{Basic definitions}  

In this section we introduce all the necessary definitions and results that will be used throughout this article. We refer to \cite{DE} for more details.

Let $G$ be a group and $K$ be any field. We denote by $e$ the identity of $G$.

\begin{definition}
A \textit{partial representation} of $G$ on the $K$-vector space $V$ is a map $\pi : G \to \End_K(V )$ such that, for 
all $s, t \in G$, we have:

\begin{itemize}
\item [(a)] $\pi (s) \pi (t) \pi (t^{-1}) = \pi (st) \pi (t^{-1})$; 
\item [(b)] $\pi (s^{-1})\pi (s) \pi (t) = \pi(s^{-1})\pi(st)$;
\item [(c)] $\pi (e) = 1$,
\end{itemize}
where $1 = \id_V$ is the identity map on $V$. 
\end{definition}

In other words, $\pi$ is a partial representation of $G$ if the equality $\pi (s)\pi (t) = \pi(st)$ holds when the two sides are multiplied either by $\pi (s^{-1})$ on the left or by $\pi (t^{-1})$ on the right.

\begin{example}
Every representation of $G$ is a partial representation; moreover, if $H$ is any subgroup of $G$ and $\pi : H \to \End_K(V )$ is a partial representation of $H$, then the map $\tilde {\pi} : G \to \End_K(V )$ given by 
\[ \tilde {\pi }(g) = \begin{cases} \pi (g)  \quad & \mbox{if $g \in H$,} \\ 0  & \mbox{otherwise} \end{cases}\]
defines 
a partial representation of $G$. 
\end{example}

 \begin{example}
Partial representations underlie important algebras generated by partial isometries. Among the most interesting cases are the Cuntz- Krieger algebras \cite{CK}, that is, universal $\C^*$-algebras generated by a finite set of partial isometries $\{ S_1, \dots , S_n\}$ subject to some conditions.
In this case there exists a partial representation of the free group $\F_n$ sending the $i$-th canonical generator to $S_i$. This idea was generalized in \cite{EL} to treat the case of infinite matrices and was used to give the first definition of Cuntz-Krieger algebras for transition matrices on infinitely many states.
\end{example}

Let $\pi : G \to \End_K(V)$  and $\pi' : G \to \End_K(W)$ be two partial representations of $G$. A {\it morphism of partial representations}  is a linear map $f : V \to W$  such that $f \circ \pi (g) = \pi' (g) \circ f$ for any $g \in G$.
The category of partial representations of $G$, denoted as $\ParRep G$ is the category whose objects are pairs $ (V,\pi)$, where $V$ is a $K$-vector space and $\pi : G \to \End_K(V)$ is a partial representation of $G$ on $V$, and whose morphisms are morphisms of partial representations.

Let $B$, $C$ be algebras and $\pi_{1}: G \to \End_{K} B$, $\pi_{2}: G \to \End_{K} C$ partial representations. Then we can define a partial representation of $C^{op}$, $\pi_{2}^{op}:G \to \End_K C^{op}$ given by $\pi_{2}^{op}(g) = \pi_{2}(g^{-1})$ and so a partial representation of $B { \otimes} C^{op}$ given by $\pi(g) =  \pi_{1}(g) \otimes \pi_{2}^{op}(g)$.

In order to study the representations of $G$ one can consider the group algebra ${\rm K} G$ which is an associative algebra with the same representation theory of the group $G$; in a similar fashion, we can define the partial group algebra $\Kpar G$, whose representations are in one-to-one correspondence with the partial representations of $G$ as follows.

\begin{definition}
Given a group $G$ and a field $K$, the {\it partial group algebra} $\Kpar G$ is the universal $K$-algebra with unit $1$ generated by the set of symbols $\{ [g] : g \in G \}$, with relations:

\begin{itemize}
\item [(1)] $[e] = 1$;
\item [(2)] $[s^{-1}][s][t] = [s^{-1}][st]$;
\item [(3)] $ [s][t][t^{-1}] = [st][t^{-1}]$;
for all $s, t \in G$.
\end{itemize}
\end{definition}

Clearly  the map $G \to \Kpar G$ given by $g \mapsto [ g ]$  is a partial representation of the group $G$ on the algebra $\Kpar G$.

\begin{teo}
The category $\ParRep G$ is equivalent to the category $\Rep \Kpar G$.
\end{teo}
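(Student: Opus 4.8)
The plan is to exhibit a pair of mutually inverse (up to natural isomorphism) functors between $\ParRep G$ and $\Rep \Kpar G$. The guiding principle is the universal property of $\Kpar G$: it is the universal $K$-algebra generated by symbols $[g]$ subject exactly to the relations (1)--(3), which are the algebra-level transcriptions of the defining identities (a)--(c) of a partial representation. I would therefore first construct a functor $\Phi : \ParRep G \to \Rep \Kpar G$ as follows. Given a partial representation $\pi : G \to \End_K(V)$, the assignment $[g] \mapsto \pi(g)$ respects relations (1)--(3) by conditions (c), (b) and (a) respectively, so by the universal property it extends uniquely to an algebra homomorphism $\tilde\pi : \Kpar G \to \End_K(V)$, i.e.\ to a $\Kpar G$-module structure on $V$. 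On morphisms, a map $f : V \to W$ intertwining every $\pi(g)$ with $\pi'(g)$ will automatically intertwine the extended actions, since the $[g]$ generate $\Kpar G$ as an algebra; this is the step where I must check that $\Phi$ is well defined on arrows and is functorial, which is routine.

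Conversely I would build $\Psi : \Rep \Kpar G \to \ParRep G$. Given a $\Kpar G$-module, equivalently an algebra map $\rho : \Kpar G \to \End_K(V)$, I define $\pi(g) = \rho([g])$. Because the map $g \mapsto [g]$ is already a partial representation of $G$ on $\Kpar G$ (noted just after the definition) and $\rho$ is an algebra homomorphism, the identities (a)--(c) for $\pi$ follow by applying $\rho$ to relations (1)--(3); so $\pi$ is a genuine partial representation. A $\Kpar G$-linear map is in particular linear and commutes with the action of each $[g]$, hence is a morphism of partial representations, giving $\Psi$ on arrows.

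Finally I would verify that $\Phi$ and $\Psi$ are mutually quasi-inverse. In fact I expect them to be strictly inverse to each other on the nose: starting from $\pi$, extending to $\tilde\pi$ and then restricting to the generators recovers $\pi(g) = \tilde\pi([g])$; starting from $\rho$, restricting to the $[g]$ and then re-extending yields an algebra map agreeing with $\rho$ on the algebra generators $[g]$, hence equal to $\rho$. Both composites act as the identity on objects and on morphisms, so the identity natural transformations serve as the required natural isomorphisms, and the equivalence (indeed isomorphism) of categories follows.

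The only genuine subtlety — the part I would treat most carefully rather than the bookkeeping — is the application of the universal property in constructing $\Phi$: one must confirm that relations (1)--(3) are \emph{exactly} matched by (a)--(c), with the correct correspondence (note that (1)$\leftrightarrow$(c), (2)$\leftrightarrow$(b), (3)$\leftrightarrow$(a)), so that the extension $\tilde\pi$ exists and is unique. Once the dictionary between the two sets of relations is pinned down, everything else is a direct consequence of the universal property and the fact that the symbols $[g]$ generate $\Kpar G$ as a $K$-algebra.
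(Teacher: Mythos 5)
Your proposal is correct and follows essentially the same route as the paper: extend a partial representation by the universal property of $\Kpar G$ to a representation of the algebra, and conversely restrict a representation to the generators $[g]$, checking that relations (1)--(3) match conditions (a)--(c). The paper leaves the verification as ``straightforward''; you have simply spelled out the same argument in more detail.
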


\begin{proof}
It is straightforward to check that, if $V$ is any K-vector space and $\pi_V : G \to \End_K(V)$ is a partial representation of $G$ in $V$, then $\pi_V$ extends uniquely by linearity to a representation $\phi_V  : \Kpar G \to \End_K(V)$ such that $\phi_V([g]) = \pi_V(g)$, that is,
\[ \xymatrix{
G  \ar[drr]_{\pi_V}  \ar[rr]^{ g \mapsto [g]} & &\Kpar G  \ar[d]^{\phi_V} \\
& & \End_K(V) }\]
Conversely, if $\phi_V : \Kpar G \to \End_K(V)$ is a representation, then $\pi _V (g) = \phi_V ([g])$ gives a partial representation of $G$ in $V$. 
\end{proof}

To see how partial representations of groups are closely related to the concept of partial actions of groups, let us briefly remember some facts about partial group actions.

\begin{definition} Let $G$ be a group and $A$ an algebra, a {\it partial action} $\alpha$ of $G$ on $A$ is given by a collection $\{D_g\}_{ g \in G }$ of ideals of $A$ and a collection $\{\alpha_g : D_{g^{-1}} \to  D_g\}_{g\in G}$ of (not necessarily unital) algebra isomorphisms, satisfying the following conditions:
\begin{itemize}
\item [(1)] $D_e =A$, and $\alpha_e =\id_A$;
\item [(2)] $\alpha_h (D_{h^{-1}} \cap D_{(gh)^{-1}})=D_h \cap D_{g^{-1}}$;
\item [(3)] If $x \in D_{h^{-1}} \cap D_{(gh)^{-1}} $, then $\alpha_g \alpha_h (x) = \alpha_{gh} (x) $.
\end{itemize}

It can be easily seen that condition $(2)$ can be replaced by
the ÒweakerÓ condition: $\alpha_h ( D_{(gh)^{-1}})  \supseteq D_h \cap D_{g^{-1}}$.

\begin{example} \label{restriction}
An action of $G$ on an algebra $A$ is clearly a partial action, defining $D_g= A$ for all $g \in G$ and $\alpha_g$ the map $a \in A \mapsto g(a) \in A$. Moreover, 
every unital ideal of $A$ carries a partial action: if $B$ is such an ideal, with unit $1_B$, then 
a partial $G$-action $\beta$ on $B$ is obtained by defining $D_g = B \cap g(B)$ and $\beta_g$ to be the restriction of $\alpha_g$ to the ideal $D_{g^{-1}}$.  Note that each ideal $D_g$ of $B$  is also unital, with unit  $u_g = 1_B g(1_B)$.
\end{example}

Consider two partial actions $(A, \{D_g\}_{g\in G}, \{\alpha_g\}_{g \in G})$ and $(B, \{E_g\}_{g\in G}, \{\beta_g\}_{g \in G})$. A {\it morphism of partial actions} $$\varphi: (A, \{D_g\}_{g\in G}, \{\alpha_g\}_{g \in G}) \to (B, \{E_g\}_{g\in G}, \{\beta_g\}_{g \in G})$$  is an algebra morphism $\varphi : A \to B$ such that $\varphi (D_g) \subset E_g$ and 

\[ \xymatrix{
D_{g^{-1}} \ar[r]^{\alpha_g}  \ar[d]_{\varphi} &  D_g \ar[d]^{\varphi} \\
E_{g^{-1}} \ar[r]_{\beta_g}  &  E_g 
}\]
for all $g \in G$.
Partial actions and the morphisms between them form a category that we denote as $\ParAct G$.
\end{definition}

\begin{remark}
\begin{itemize}
\item [a)]
Since the domain of  $\alpha_g \alpha_h$ is $\alpha_h^{-1}(D_h \cap D_{g^{-1}})$, conditions $(2)$ and $(3)$ in the previous definition say that $ \alpha_{gh}$ is only an extension of $\alpha_g \alpha_h$.  However, the partial relations hold, that is, 
$\alpha_g \alpha_h \alpha_{g^{-1}} =   \alpha_{gh} \alpha_{g^{-1}}$ and $\alpha_{h^{-1}} \alpha_g \alpha_h  =  \alpha_{h^{-1}} \alpha_{gh}$.  In fact these partial relations can be used to rephrase the 
definition of partial action, see \cite[Proposition 4.1]{E}.
\item [b)] If $A = \sum_{g \in G} A_g$ is a $G$-graded algebra, by definition the product $A_g A_h$ is contained in $A_{gh}$, but in general they do not coincide.  
However, if $A_g A_{g^{-1}} A_g = A_g$ for any $g \in G$, the partial relations between ideals hold, that is, $A_g A_h A_{g^{-1}} =   A_{gh} A_{g^{-1}}$ and $A_{h^{-1}} A_g A_h  =  A_{h^{-1}} A_{gh}$, see \cite[Proposition 5.3]{E}.
\end{itemize}
\end{remark}

A partial action of a group $G$ on a algebra $A$ enables us to construct a new algebra, called the {\it partial smash product} 
(also referred to as the ``partial skew group ring''), denoted by $A \times_\alpha G$. Basically
$$A \times_\alpha G = \sum_{g \in G} D_g \# g $$
as a $K$-module and with the product defined as 
\[
(a_g  \# g)(b_h  \# h) =   \alpha_g( \alpha_{g^{-1}} (a_g)b_h) \# {gh}. 
\]
Note that $\alpha_{g^{-1}} (a_g) \in D_{g^{-1}}$, $b_h \in D_h$ and therefore 
\[
\alpha_g( \alpha_{g^{-1}} (a_g)b_h) \in \alpha_g(D_{g^{-1}} D_h) \subset \alpha_g(D_{g^{-1}} \cap D_h)
\subset D_g \cap D_{gh} \subset D_{gh}. 
\]

\begin{remark}
It is well known that the definitions of skew group rings and of smash products coincide when the Hopf algebra considered is $KG$.  Similarly, the definitions of partial skew group ring and of partial smash product coincide when the ideals $D_g$ are of the form $Au_g$.
\end{remark}

\begin{example}
Let $A$ be the commutative algebra $A=k[x,y]/ \langle x^2, y^2 \rangle$, $G= \langle g: g^2=1 \rangle$ the cyclic group of order $2$ and $I=Ay$ the ideal generated by $y$ (generated by $y$ and $xy$ as a vector space). Consider the partial action $\alpha$ of $G$ on $A$ given by $D_g =I$, $\alpha_g(y)= xy$, $\alpha_g(xy)= y$. 
Then the partial smash product $A \times_\alpha G$ is not associative. More precisely, taking $u = x \delta_1 + xy \delta_g$ we have that $(uu)u=0$ and $u(uu)= xy\delta_g$, see \cite[Example 3.5]{DE}.
\end{example}

From now on we assume that the domains $D_g$ are ideals of the form $Au_g$ where  
the generators $u_g$ are central idempotents of $A$ for each $g \in G$. 
This condition naturally appears, for instance, in the description of a Leavitt path algebra
as a partial smash product \cite{Goncalves} and in the development of the Galois theory for partial actions \cite{DFP07,BP12,KS14}; 
it also determines whether the partial action can be obtained as a restriction as in Example \ref{restriction} \cite[Theorem 4.5]{DE}.
In this case the partial smash product is automatically associative and the formula of the product in $A \times_\alpha G$ simplifies to 
$$(a u_g   \# g)(b u_h  \# h) =  a  \alpha_g( b  u_h u_{g^{-1}} ) u_{gh}  \# {gh}.$$

It is easy to verify that the map $\pi_0  : G \to A \times_\alpha G$, given by $\pi_0 (g) = u_g\# g$ is a partial representation of the group $G$ on the algebra $A \times_\alpha G$.

The partial smash product has an important universal property. Let $A$ be an algebra on which the group $G$ acts partially, consider the canonical inclusion
$\phi_0 : A \to  A \times_\alpha G$ defined by $\phi_0 (a) = a u_e \#e$
which is easily seen to be an algebra monomorphism. Given a $K$-vector space $V$, a pair of maps $(\phi_V,\pi_V) $ is said to be a {\it covariant pair} if $\phi_V  : A \to \End_K(V)$ is a representation and $\pi_V  : G \to \End_K(V)$  is a partial representation such that
$$ \phi_V(\alpha_g(au_{g^{-1}} )) = \pi_V (g) \phi_V (a) \pi_V (g^{-1}).$$
We denote $\CovPair (A,G)$ the category whose objects are covariant pairs $(\phi_V,\pi_V) $, and a {\it morphism between covariant pairs} $f: (\phi_V,\pi_V) \to (\phi_{W},\pi_{W}) $  is a linear map $f : V \to W$ such that $f \circ \pi_V (g) = \pi_{W} (g) \circ f$  and $f \circ \phi_V (g) = \phi_{W} (g) \circ f$ for any $g \in G$.

\medskip

The universal property of $A \times_\alpha G$ is given by the following result.

\begin{teo} Let $A$ be an algebra on which the group $G$ acts partially, $V$ a $K$-vector space and $(\phi_V, \pi_V)$ a covariant pair related to these data. Then there exists a unique algebra morphism $\Phi :A \times _\alpha  G \to \End_K(V)$ such that
\[ \xymatrix{
& A \times_{\alpha} G  \ar[dd]_\Phi \\
A  \ar[ru]^{\phi_0}  \ar[rd]_{\phi_V} & & G \ar[lu]_{\pi_0}  \ar[dl]^{\pi_V} \\
& \End_K(V)
} \]   
is commutative.
\end{teo}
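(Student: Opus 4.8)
The plan is to reduce the whole statement to the multiplicativity of a single explicitly-defined map, handling uniqueness first because it dictates the formula. For uniqueness I would note that the images of $\phi_0$ and $\pi_0$ generate $A \times_\alpha G$ as an algebra: a direct computation with the product formula gives $\phi_0(a)\pi_0(g) = (au_e \# e)(u_g \# g) = au_g \# g$, so every homogeneous element $au_g \# g$ factors through these two maps. Consequently, any $\Phi$ making the diagram commute is forced to satisfy $\Phi(au_g \# g) = \Phi(\phi_0(a))\Phi(\pi_0(g)) = \phi_V(a)\pi_V(g)$, which determines it completely.

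For existence I would take this as a definition, setting $\Phi(x \# g) = \phi_V(x)\pi_V(g)$ on each summand $D_g \# g$ and extending linearly. Since $A \times_\alpha G$ is a direct sum of the subspaces $D_g \# g$ and $\phi_V$ is already a linear map on all of $A \supseteq D_g$, there is no identification to respect and $\Phi$ is unambiguously well defined; it remains to check that it is a unital algebra morphism restricting correctly along $\phi_0$ and $\pi_0$.

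The technical core is a short list of identities that I would isolate before attempting multiplicativity. Evaluating the covariant pair relation at $a = 1$ and using $\alpha_g(u_{g^{-1}}) = u_g$ gives $\phi_V(u_g) = \pi_V(g)\pi_V(g^{-1})$; combined with partial representation axiom (a) this yields $\phi_V(u_g)\pi_V(g) = \pi_V(g)$ and, symmetrically, $\pi_V(g)\phi_V(u_{g^{-1}}) = \pi_V(g)$. These already settle the two compatibilities, since $\Phi(\phi_0(a)) = \phi_V(au_e)\pi_V(e) = \phi_V(a)$ and $\Phi(\pi_0(g)) = \phi_V(u_g)\pi_V(g) = \pi_V(g)$, as well as unitality ($1_{A \times_\alpha G} = u_e \# e \mapsto \id_V$). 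From the covariant relation I would also extract the intertwining identity $\pi_V(g)\phi_V(d) = \phi_V(\alpha_g(d))\pi_V(g)$, valid for $d \in D_{g^{-1}}$, which is the tool for transporting $\pi_V(g)$ past $\phi_V$.

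The main obstacle is multiplicativity, precisely because $\pi_V$ is only a partial representation: one cannot simply replace $\pi_V(g)\pi_V(h)$ by $\pi_V(gh)$. Expanding $\Phi$ on the product $(au_g\#g)(bu_h\#h)$ and on $\Phi(au_g\#g)\,\Phi(bu_h\#h)$, I would first use $\pi_V(g) = \pi_V(g)\phi_V(u_{g^{-1}})$ to rewrite $\pi_V(g)\phi_V(bu_h) = \pi_V(g)\phi_V(bu_hu_{g^{-1}})$ and then apply the intertwining identity to move $\pi_V(g)$ across; this brings the right-hand side to $\phi_V(X)\pi_V(g)\pi_V(h)$ with $X = a\,\alpha_g(bu_hu_{g^{-1}})$, while the product formula for $A \times_\alpha G$ brings the left-hand side to $\phi_V(X)\pi_V(gh)$ (using $\phi_V(u_{gh})\pi_V(gh) = \pi_V(gh)$). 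The equality of the two then reduces to $\phi_V(X)\pi_V(g)\pi_V(h) = \phi_V(X)\pi_V(gh)$, and here the idempotents do the work: since $\alpha_g$ maps into $D_g = Au_g$, the element $X$ is divisible by $u_g$, so $\phi_V(X) = \phi_V(X)\phi_V(u_g) = \phi_V(X)\pi_V(g)\pi_V(g^{-1})$; inserting this and applying axiom (b) to collapse $\pi_V(g^{-1})\pi_V(g)\pi_V(h)$ into $\pi_V(g^{-1})\pi_V(gh)$ produces exactly $\phi_V(X)\pi_V(gh)$. Thus the idempotent support of the smash-product coefficients is precisely what compensates for the failure of $\pi_V$ to be multiplicative, and verifying this cancellation carefully is the heart of the argument.
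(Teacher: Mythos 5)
Your proposal is correct and follows the same route as the paper, which simply defines $\Phi(au_g\#g)=\phi_V(a)\pi_V(g)$ and asserts that it works. You supply all the verifications the paper omits (generation by the images of $\phi_0$ and $\pi_0$ for uniqueness, the identities $\phi_V(u_g)=\pi_V(g)\pi_V(g^{-1})$ and the intertwining relation, and the multiplicativity check where the idempotents $u_g$ compensate for $\pi_V$ being only a partial representation), and these checks are all sound.
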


\begin{proof}
It is clear that the map $\Phi: A \times_\alpha G \to \End_k(V)$ defined by $\Phi (a u_g \# g) =  \phi_V (a) \pi_V (g)  $ gives the desired result. 
\end{proof}

\begin{coro} Let $A$ be an algebra on which the group $G$ acts partially.  Then the category $\Rep A\times_\alpha G$ is equivalent to the category of covariant pairs $\CovPair (A,G)$.
\end{coro}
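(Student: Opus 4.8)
The plan is to upgrade the universal property just established into an equivalence by exhibiting functors in both directions and checking that they are mutually inverse. In one direction, define $F \colon \CovPair(A,G) \to \Rep A \times_\alpha G$ by sending a covariant pair $(\phi_V,\pi_V)$ to the representation $\Phi$ produced by the previous theorem, namely $\Phi(a u_g \# g) = \phi_V(a)\pi_V(g)$, and sending a morphism of covariant pairs to the same underlying linear map. In the other direction, define $H \colon \Rep A \times_\alpha G \to \CovPair(A,G)$ by restricting a representation $\Phi \colon A \times_\alpha G \to \End_K(V)$ along the two canonical maps, i.e.\ $\phi_V := \Phi \circ \phi_0$ and $\pi_V := \Phi \circ \pi_0$, where $\phi_0(a) = a u_e \# e$ and $\pi_0(g) = u_g \# g$.

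First I would check that $H$ is well defined, that is, that $(\Phi\circ\phi_0,\Phi\circ\pi_0)$ really is a covariant pair. Since $\phi_0$ is an algebra monomorphism and $\pi_0$ is a partial representation (both recorded above), $\Phi\circ\phi_0$ is a representation of $A$ and $\Phi\circ\pi_0$ is a partial representation of $G$. The covariance identity $\phi_V(\alpha_g(a u_{g^{-1}})) = \pi_V(g)\phi_V(a)\pi_V(g^{-1})$ then reduces, after applying the algebra morphism $\Phi$ to both sides, to the single identity $(u_g \# g)(a u_e \# e)(u_{g^{-1}} \# g^{-1}) = \alpha_g(a u_{g^{-1}})\,u_e \# e$ in $A \times_\alpha G$, which I would verify directly from the multiplication formula $(a u_g \# g)(b u_h \# h) = a\,\alpha_g(b\,u_h u_{g^{-1}})\,u_{gh} \# gh$ (using $\alpha_g(u_{g^{-1}}) = u_g$ and the idempotency of the $u_g$). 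On morphisms, any linear map $f$ intertwining $\Phi$ and $\Psi$ automatically intertwines $\phi_V=\Phi\circ\phi_0$ with $\phi_W=\Psi\circ\phi_0$ and $\pi_V=\Phi\circ\pi_0$ with $\pi_W=\Psi\circ\pi_0$, so $H$ is well defined on morphisms; symmetrically, $F$ sends a morphism of covariant pairs to a morphism of representations because $f\,\Phi(a u_g\# g)=f\,\phi_V(a)\pi_V(g)=\phi_W(a)\pi_W(g)\,f=\Psi(a u_g\# g)\,f$.

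The key structural fact I would then invoke is that $A \times_\alpha G$ is generated as an algebra by the images of $\phi_0$ and $\pi_0$: indeed $\phi_0(a)\pi_0(g) = (a u_e \# e)(u_g \# g) = a u_g \# g$, so every generator factors through these two maps. This at once yields $F \circ H = \id$ on objects, since for $\Phi' := F(H(\Phi))$ one computes $\Phi'(a u_g \# g) = \Phi(\phi_0(a))\Phi(\pi_0(g)) = \Phi(a u_g \# g)$, and the same factorization shows that a linear map intertwines all of $\Phi$ exactly when it intertwines both families $\phi_V(a)$ and $\pi_V(g)$, so $F$ and $H$ are mutually inverse on morphisms as well. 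For $H \circ F = \id$ I would check, starting from $(\phi_V,\pi_V)$ and $\Phi = F(\phi_V,\pi_V)$, that $\Phi(\phi_0(a)) = \phi_V(a)\pi_V(e) = \phi_V(a)$ using $\pi_V(e) = \id_V$, and that $\Phi(\pi_0(g)) = \phi_V(u_g)\pi_V(g) = \pi_V(g)$. The main obstacle is this last equality: it is not formal, but follows by combining the covariance condition evaluated at $a=u_e$ (which gives $\phi_V(u_g) = \phi_V(\alpha_g(u_{g^{-1}})) = \pi_V(g)\pi_V(g^{-1})$, since $\phi_V(u_e)=\id_V$) with the partial-representation axiom~(b) in the form $\pi_V(g)\pi_V(g^{-1})\pi_V(g) = \pi_V(g)\pi_V(e) = \pi_V(g)$. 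Once these identities are in place, both composites are the identity functor, and we obtain the desired equivalence (indeed, an isomorphism) of categories.
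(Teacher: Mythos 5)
Your proof is correct and follows the route the paper intends: the corollary is stated there without proof as an immediate consequence of the preceding universal-property theorem, and your two functors --- $(\phi_V,\pi_V)\mapsto\Phi$ via the universal property, and $\Phi\mapsto(\Phi\circ\phi_0,\Phi\circ\pi_0)$ in the other direction --- are exactly the intended mutually inverse constructions. The one genuinely non-formal point, $\phi_V(u_g)\pi_V(g)=\pi_V(g)$, you handle correctly by evaluating the covariance relation at $a=1_A$ to get $\phi_V(u_g)=\pi_V(g)\pi_V(g^{-1})$ and then invoking the partial-representation axioms.
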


A very important result in the theory of partial representations of groups is that the partial group algebra $\Kpar G$ is always isomorphic to a partial smash product. First, it is important to note that the partial group algebra $\Kpar G$ has a natural $G$-grading. Indeed we can decompose, as a vector space, the whole partial group algebra as
$$\Kpar G  = \sum_{g \in G} B_g,$$
where each subspace $B_g$ is generated by elements of the form $[h_1][h_2]...[h_n]$ such that $g = h_1h_2 ...h_n$, and it is easy to see that the product in $\Kpar G$ makes $B_gB_h \subset B_{gh}$. Now, for each $g \in G$ define the element $e_g = [g][g^{-1}]  \in \Kpar G$. One can prove easily that these $e_g$ are idempotents for each $g \in  G$. These elements satisfy the following commutation relation:
$$[g]e_h = e_{gh}[g].$$
Indeed,
\begin{align*}
[g]e_h & =  [g][h][h^{-1}] = [gh][h^{-1}] \\
& =  [gh][(gh)^{-1}][gh][h^{-1}] = [gh][(gh)^{-1}][g] \\
& = e_{gh}[g].
\end{align*} 
From this, one can prove that all $e_g$ commute among themselves. Define the subalgebra $B = <e_g|g \in G > \subset \Kpar G$. This is a commutative algebra generated by central idempotents, and it is not difficult to prove that the subalgebra $B$ corresponds to the uniform subalgebra $B_e$ coming from the natural $G$ grading above presented. Then, we have the following two results.

\begin{teo}
Given a group $G$, there is a partial action of $G$ on the commutative algebra $B$ above defined, such that $\Kpar G = B \times_\beta G$.
\end{teo}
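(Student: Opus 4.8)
The plan is to realize the partial action $\beta$ as conjugation by the symbols $[g]$ inside $\Kpar G$, and then to write down mutually inverse maps between $\Kpar G$ and $B\times_\beta G$. First I would define the partial action: set $D_g = Be_g$, a unital ideal of $B$ with unit the central idempotent $e_g$ (here $e_e = [e][e^{-1}] = 1$, so $D_e = B$), and define $\beta_g : D_{g^{-1}} \to D_g$ by $\beta_g(x) = [g]\,x\,[g^{-1}]$. Using the commutation relation $[g]e_h = e_{gh}[g]$ together with $[g]e_{g^{-1}} = [g]$ (a consequence of relation (2)), one computes $\beta_g(e_h e_{g^{-1}}) = e_{gh}e_g \in D_g$, so $\beta_g$ is well defined; the identity $[g^{-1}][g] = e_{g^{-1}}$ shows $\beta_{g^{-1}}$ is its inverse, and multiplicativity is immediate from commutativity of $B$. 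It then remains to verify the partial-action axioms: $\beta_e = \id$ is clear, and conditions (2) and (3) translate, via $\beta_g(x) = [g]x[g^{-1}]$, into identities among the idempotents $e_g$ and the generators $[g]$ that follow from relations (2) and (3) of $\Kpar G$ and from $[g]e_h = e_{gh}[g]$.

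Next I would build the isomorphism. Since $B\times_\beta G = \bigoplus_{g\in G} D_g \# g$ as a $K$-vector space, define a linear map $\Psi : B\times_\beta G \to \Kpar G$ by $\Psi(x \# g) = x[g]$ for $x \in D_g$, and define $\Theta : \Kpar G \to B\times_\beta G$ on generators by $\Theta([g]) = e_g \# g$. The key step is to check that the elements $e_g \# g$ satisfy relations (1)--(3) defining $\Kpar G$, so that $\Theta$ is a well-defined algebra homomorphism; this is a short computation with the simplified product formula $(ae_g\#g)(be_h\#h) = a\,\beta_g(be_he_{g^{-1}})\,e_{gh}\#gh$, using for instance $\beta_{s^{-1}}(e_s) = e_{s^{-1}}$. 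Finally I would check $\Psi\Theta = \id$ and $\Theta\Psi = \id$: on generators $\Psi\Theta([g]) = e_g[g] = [g]$, while $\Theta(a) = a\#e$ for every $a\in B$ (since $\Theta(e_k)=\Theta([k])\Theta([k^{-1}])=e_k\#e$) gives $\Theta\Psi(ae_g\#g) = (a\#e)(e_g\#g) = ae_g\#g$. Hence $\Theta$ is bijective, and therefore an algebra isomorphism with inverse $\Psi$, which is the asserted equality $\Kpar G = B\times_\beta G$.

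The conceptual content is light; the main obstacle is purely computational bookkeeping, concentrated in the partial-action axioms---above all in showing that $\beta_g\beta_h$ and $\beta_{gh}$ agree on $D_{h^{-1}}\cap D_{(gh)^{-1}} = Be_{h^{-1}}e_{(gh)^{-1}}$. The difficulty is that $[g][g^{-1}] = e_g \neq 1$, so one cannot freely cancel the symbols $[g]$; every step must instead exploit the single commutation relation $[g]e_h = e_{gh}[g]$ together with the collapse identities $e_g[g] = [g]$ and $[g]e_{g^{-1}} = [g]$, keeping careful track of which idempotent factors survive. The same care makes the well-definedness of $\Theta$ routine once the axioms for $\beta$ are in hand.
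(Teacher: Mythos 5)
Your construction coincides with the paper's: the same partial action ($D_g=Be_g$, $\beta_g=[g]\,(-)\,[g^{-1}]$) and the same two maps (your $\Theta$ is the paper's $\hat\pi$, your $\Psi$ is the paper's $\varphi$), so the approach is not different in substance. There is, however, one genuine gap in the argument as written: the verification of $\Psi\Theta=\id$. You introduce $\Psi$ only as a linear map, so $\Psi\Theta$ is not known to be multiplicative, and checking it on the generators $[g]$ alone does not propagate to products $[g_1]\cdots[g_n]$ --- and you do need $\Psi\Theta=\id$ in full, since $\Theta\Psi=\id$ by itself only gives surjectivity of $\Theta$, not injectivity. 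Two ways to close it: (a) show first that $\Psi$ is an algebra homomorphism, which the paper gets for free by obtaining $\varphi$ from the covariant pair consisting of the inclusion $B\hookrightarrow\Kpar G$ and the canonical partial representation $g\mapsto[g]$, via the universal property of the smash product; or (b) compute $\Psi\Theta$ on an arbitrary word: since $\Theta([g_1]\cdots[g_n])=e_{g_1}e_{g_1g_2}\cdots e_{g_1\cdots g_n}\#\,(g_1\cdots g_n)$, you must invoke the identity $[g_1]\cdots[g_n]=e_{g_1}e_{g_1g_2}\cdots e_{g_1\cdots g_n}[g_1\cdots g_n]$ in $\Kpar G$, proved by induction from $[g][h]=[g][g^{-1}][g][h]=[g][g^{-1}][gh]=e_g[gh]$. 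Everything else in your write-up (the partial-action axioms, well-definedness of $\Theta$ via relations (1)--(3), and $\Theta\Psi=\id$ checked on the spanning set $ae_g\#g$) is sound and matches the paper, which likewise leaves the ``mutually inverse'' computation to the reader.
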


\begin{proof} In order to define a partial action of $G$ on $B$, we have to give the domains $D_g$ and the isomorphisms $\beta_g : D_{g^{-1}} \to  D_g$ for each $g \in G$. As the elements $e_g$ are central idempotents in $B$, define the ideals $D_g = e_gB$. Clearly, these ideals are unital algebras with unit $e_g$. Now, the partially defined isomorphisms between these ideals are
\[\beta_g(e_{g^{-1}}e_{h_1} \cdots e_{h_n}) = [g]e_{g^{-1}}e_{h_1} 
\cdots e_{h_n}[g^{-1}] = e_ge_{gh_1} \cdots e_{gh_n}.\]
It is easy to verify that these data indeed define a partial action of $G$ on $B$.
In order to prove the isomorphism, let us use both universal properties, of the partial smash product and of the partial group algebra. First, the map $\pi_0 : G \to B\times_\beta   G$ given by $\pi_0(g) = e_g \# g$ is a partial representation of the group $G$ on the partial smash product. Then, there is a unique algebra morphism $\hat \pi : \Kpar G \to  B \times_\beta G$, which factorizes this partial representation. This morphism can be written
explicitly as
\[
\hat \pi ([g_1]...[g_n])=e_{g_1}e_{g_1g_2} \cdots e_{g_1 \cdots g_n}  \# g_1 \cdots g_n.
\]
On the other hand, the canonical inclusion of $B$ into $\Kpar G$ and the canonical
partial representation form a covariant pair relative to the algebra $\Kpar G$ then there is a unique algebra morphism $\varphi : B \times_\beta G \to  \Kpar G$ explicitly given by
\[\varphi ( e_ge_{h_1} \cdots e_{h_n} \# g ) = e_ge_{h_1}  \cdots e_{h_n}[g].
\]
Easily, one can verify that the morphisms $\hat \pi$ and $\varphi$ are mutually inverses, completing
the proof.
\end{proof}

\begin{teo}
The $K$-vector space $B$ admits a partial representation $\pi : G \to \End_K (B)$ defined by $\pi(g)(x) = [g] x [g^{-1}]$ for any $g \in G, x \in B$.
\end{teo}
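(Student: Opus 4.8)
The plan is to first verify that each $\pi(g)$ actually maps $B$ into $B$, so that $\pi(g)\in\End_K(B)$, and only then to check the three axioms of a partial representation. For the invariance step I would use that $B$ is commutative and generated by the idempotents $e_h$, hence spanned by $1$ and the monomials $e_{h_1}\cdots e_{h_n}$; since $\pi(g)$ is $K$-linear it is enough to evaluate it on such a monomial. Sliding $[g]$ to the right by repeated use of the commutation relation $[g]e_h = e_{gh}[g]$ gives
\[ \pi(g)(e_{h_1}\cdots e_{h_n}) = [g]\,e_{h_1}\cdots e_{h_n}\,[g^{-1}] = e_{gh_1}\cdots e_{gh_n}[g][g^{-1}] = e_g\,e_{gh_1}\cdots e_{gh_n}, \]
which lies in $B$. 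Thus each $\pi(g)$ is a well-defined endomorphism of $B$.

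The conceptually cleanest route to the axioms is to read $\pi(g)$ as conjugation by the canonical partial representation. Writing $L(a),R(a)\in\End_K(\Kpar G)$ for left and right multiplication by $a\in\Kpar G$, we have $\pi(g)=L([g])\circ R([g^{-1}])$ on $\Kpar G$ (and then restrict to $B$). Now $L$ is an algebra homomorphism $\Kpar G\to\End_K(\Kpar G)$ and $g\mapsto[g]$ is a partial representation of $G$ in $\Kpar G$, so $g\mapsto L([g])$ is a partial representation; dually $R$ is a homomorphism $(\Kpar G)^{op}\to\End_K(\Kpar G)$ and, by the $C^{op}$ construction recalled above, $g\mapsto[g^{-1}]$ is a partial representation of $G$ in $(\Kpar G)^{op}$, whence $g\mapsto R([g^{-1}])$ is a partial representation as well. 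Since left and right multiplications commute, these two families commute, and the elementary computation underlying the $B\otimes C^{op}$ construction (move every left factor past every right factor, then apply each axiom inside the two families separately) shows that the pointwise product $g\mapsto L([g])R([g^{-1}])$ of two commuting partial representations is again a partial representation of $G$ in $\End_K(\Kpar G)$. Restricting to the invariant subspace $B$ from the first step then yields $\pi$.

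If one prefers an entirely hands-on argument, the three identities can be verified directly, the point being that the defining relations of $\Kpar G$ are genuine equalities that remain valid with an arbitrary $x$ sandwiched in the middle, so only the head and tail strings must be rewritten. Axiom (c) is immediate from $[e]=1$. For (a) one compares
\[ \pi(s)\pi(t)\pi(t^{-1})(x)=[s][t][t^{-1}]\,x\,[t][t^{-1}][s^{-1}], \qquad \pi(st)\pi(t^{-1})(x)=[st][t^{-1}]\,x\,[t][(st)^{-1}], \]
matching the heads by relation $(3)$, $[s][t][t^{-1}]=[st][t^{-1}]$, and the tails by relation $(2)$, $[t][t^{-1}][s^{-1}]=[t][t^{-1}s^{-1}]=[t][(st)^{-1}]$; axiom (b) is symmetric, using $(2)$ on the head and $(3)$ on the tail. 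I expect the only real obstacle to be organisational: one must apply precisely the correct instances of $(2)$ and $(3)$, tracking which group elements play the roles of $s$ and $t$ in each factor. Once the invariance $\pi(g)(B)\subseteq B$ of the first step is in hand, no genuine difficulty remains.
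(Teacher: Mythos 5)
Your proposal is correct, and your ``hands-on'' variant is precisely the paper's proof: the authors simply display the identity
$[s][t][t^{-1}]\,x\,[t][t^{-1}][s^{-1}] = [st][t^{-1}]\,x\,[t][(st)^{-1}]$
and note that the analogous identity gives axiom (b); your bookkeeping of which instances of relations $(2)$ and $(3)$ rewrite the head and the tail is exactly what makes that display true. What you add beyond the paper is worthwhile on two counts. First, the invariance check $\pi(g)(B)\subseteq B$ (via $[g]e_h = e_{gh}[g]$, giving $\pi(g)(e_{h_1}\cdots e_{h_n}) = e_g e_{gh_1}\cdots e_{gh_n}$) is genuinely needed for the statement as phrased --- the theorem asserts $\pi(g)\in\End_K(B)$, not merely $\End_K(\Kpar G)$ --- and the paper leaves it implicit. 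Second, your conceptual route, writing $\pi(g)=L([g])R([g^{-1}])$ as a pointwise product of two commuting partial representations and invoking the $B\otimes C^{op}$ construction already recalled in Section~2, explains \emph{why} the head and tail can be rewritten independently; it buys a reusable principle (commuting products of partial representations are partial representations) at the cost of a small extra verification that $g\mapsto R([g^{-1}])$ is itself a partial representation, which again reduces to relation $(2)$. Either route is complete; no gaps.
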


\begin{proof}
The map $\pi(g)(x) = [g] x [g^{-1}]$ defines a partial representation since 
$\pi (e) = \id_V$ because $[e]=1$;  $\pi (s) \pi (t) \pi (t^{-1}) = \pi (st) \pi (t^{-1})$ because
\[   [s]  [t]  [t^{-1}]  x   [t]   [t^{-1}][s^{-1}] =  [st]  [t^{-1}]  x   [t]   [(st)^{-1}]
\]
and analogously, $\pi (s^{-1})\pi (s) \pi (t) = \pi(s^{-1})\pi(st)$.
\end{proof}

\section {Partial group cohomology}

In this section we define the partial group cohomology as the right derived functor of the functor of partial invariants. As a first step we show that the functor of partial invariants is representable, that is, $(-)^{G_{\mathrm{par}}} \simeq \Hom_{\Kpar G} (B, -)$. Later we relate this cohomology with partial derivations and with the partial augmentation ideal.\\

If $G$ is a group  and $\phi_V : \Kpar G \to \End_K(V)$ is an object in $\Rep \Kpar G$, the set of partial $G$-invariants of $V$  is defined as
\[
V^{G_{\mathrm{par}}} = \{ v \in V :  \phi _V([g])(v)  =\phi _V(e_g) (v)  \ \mbox {for all $g \in G$} \}.
\]
It is clear that  $V^{G_{\mathrm{par}}}$  is a $K$-vector space and if $f: V \to W$ is a morphism in $\Rep \Kpar G$ 
 and $v \in  V^{G_{\mathrm{par}}}$, then 
\[
\phi_W ([g]) ( f (v) )= f( \phi_V ([g])(v))  = f( \phi_V (e_g) (v) )=  \phi_W(e_g)(f(v)),
\]
hence $f$ induces a linear map 
$f^{G_{\mathrm{par}}} : V^{G_{\mathrm{par}}} \to W^{G_{\mathrm{par}}}$.

\begin{prop}
$(-)^{G_{\mathrm{par}}} : \Rep \Kpar G \to \Rep K$ is a left exact functor.
\end{prop}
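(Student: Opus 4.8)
The plan is to prove that $(-)^{G_{\mathrm{par}}}$ is a functor that is additive and left exact. First I would confirm functoriality: the excerpt already shows that a morphism $f : V \to W$ in $\Rep \Kpar G$ restricts to a well-defined linear map $f^{G_{\mathrm{par}}} : V^{G_{\mathrm{par}}} \to W^{G_{\mathrm{par}}}$, and one checks routinely that this assignment respects composition and identities, and that $(g f)^{G_{\mathrm{par}}} = g^{G_{\mathrm{par}}} f^{G_{\mathrm{par}}}$, so that $(-)^{G_{\mathrm{par}}}$ is indeed a functor into $\Rep K$.

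The heart of the proof is left exactness. The plan is to take a short exact sequence
\[
0 \longrightarrow U \stackrel{f}{\longrightarrow} V \stackrel{h}{\longrightarrow} W \longrightarrow 0
\]
in $\Rep \Kpar G$ and show that the induced sequence
\[
0 \longrightarrow U^{G_{\mathrm{par}}} \stackrel{f^{G_{\mathrm{par}}}}{\longrightarrow} V^{G_{\mathrm{par}}} \stackrel{h^{G_{\mathrm{par}}}}{\longrightarrow} W^{G_{\mathrm{par}}}
\]
is exact. Injectivity of $f^{G_{\mathrm{par}}}$ is immediate since it is a restriction of the injective map $f$. For exactness at $V^{G_{\mathrm{par}}}$, the inclusion $\operatorname{im} f^{G_{\mathrm{par}}} \subseteq \Ker h^{G_{\mathrm{par}}}$ follows from $h \circ f = 0$. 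The main point is the reverse inclusion: given $v \in V^{G_{\mathrm{par}}}$ with $h(v) = 0$, by exactness of the original sequence there is a unique $u \in U$ with $f(u) = v$, and I must verify that $u$ is itself a partial invariant. This is where the definition is used directly: for each $g \in G$ one computes $f(\phi_U([g])(u) - \phi_U(e_g)(u)) = \phi_V([g])(v) - \phi_V(e_g)(v) = 0$, using that $f$ is a $\Kpar G$-module map and that $v \in V^{G_{\mathrm{par}}}$; injectivity of $f$ then forces $\phi_U([g])(u) = \phi_U(e_g)(u)$, so $u \in U^{G_{\mathrm{par}}}$.

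I expect the only mild obstacle to be purely bookkeeping: keeping straight that the $\Kpar G$-action commutes with the morphisms (so that the invariance condition transports cleanly along $f$), and that injectivity of $f$ is what upgrades "the difference lies in the kernel" to "the difference vanishes." There is no need to prove right exactness, and in fact one does not expect it — the failure of surjectivity of $h^{G_{\mathrm{par}}}$ is precisely what makes the right derived functors nontrivial and motivates the definition of partial group cohomology in the sequel. Thus the proof reduces to the short diagram chase above, and the representability statement $(-)^{G_{\mathrm{par}}} \simeq \Hom_{\Kpar G}(B, -)$ announced earlier would give an alternative, cleaner argument, since $\Hom$ out of a fixed module is always left exact.
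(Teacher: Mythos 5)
Your proof is correct, but it takes a different route from the paper. You establish left exactness by a direct diagram chase: given a short exact sequence $0 \to U \to V \to W \to 0$, you verify exactness of $0 \to U^{G_{\mathrm{par}}} \to V^{G_{\mathrm{par}}} \to W^{G_{\mathrm{par}}}$ by hand, the only real content being that a preimage $u$ of an invariant $v \in V^{G_{\mathrm{par}}}$ is itself invariant because $f$ is an injective $\Kpar G$-map killing $\phi_U([g])(u) - \phi_U(e_g)(u)$. That argument is sound and more elementary. The paper instead proves exactly the representability you mention in your last sentence as an ``alternative'': it exhibits a natural isomorphism $(-)^{G_{\mathrm{par}}} \simeq \Hom_{\Kpar G}(B,-)$ via $v \mapsto f_v$ with $f_v(1)=v$, using the identity $e_{g_1}\cdots e_{g_m} = [h_1]\cdots[h_m]\,1\,[h_m^{-1}]\cdots[h_1^{-1}]$ to see that such a morphism is determined by its value at $1$, and that this value is necessarily a partial invariant; left exactness is then automatic since $\Hom$ out of a fixed object is left exact. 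The paper's choice is not merely a matter of elegance: the isomorphism $(-)^{G_{\mathrm{par}}} \simeq \Hom_{\Kpar G}(B,-)$ is precisely what justifies the subsequent definition $\H^n_{\mathrm{par}}(G,M) = \Ext^n_{\Kpar G}(B,M)$ of partial group cohomology as the right derived functor of the invariants functor, so that verification would have to be carried out anyway; your direct argument proves the proposition but leaves that identification still to be done.
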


\begin{proof}
From the previous discussion, it is clear that $(-)^{G_{\mathrm{par}}}$ is a functor. 
To see that it is left exact it is enough to see that there is a natural isomorphism
$$(-)^{G_{\mathrm{par}}} \simeq \Hom_{\Kpar G} (B, -)$$
given by 
$ v \mapsto f_v$ with $f_v(1) = v$. Observe that $f$ is uniquely defined by the element $f(1)$ since
\[
e_{g_1} e_{g_2} \cdots e_{g_m} = [h_1] [h_2] \cdots [h_m] 1 [h_m^{-1}] \cdots [h_1^{-1}]
\]
where $h_1= g_1$ and $h_i = g_{i-1}^{-1} g_i$ for any $i=2, \dots, m$.
Finally observe that the fact that any $f \in \Hom_{\Kpar G} (B, V)$ is a morphism in $\Rep \Kpar G$ implies that
\begin{align*}
\phi_V ([g])( f(1)) & =  f( \phi_B([g])(1) ) =  f([g] 1 [g^{-1}]) \\
&= f([g]  [g^{-1}] [g]  [g^{-1}])\\
&= f( \phi_B([g]   [g^{-1}] )(1) )\\
&= f( \phi_B(e_g) (1)) = \phi_V(e_g)( f(1))
\end{align*} 
and hence $f(1) \in V^{G_{\mathrm{par}}}$. 
\end{proof}

\begin{definition}
If $G$ is a group and $M$ is an object in $\Rep \Kpar G$, then the partial group cohomology groups of $G$ with coefficients in $M$ are defined as
\[ \H^n_{\mathrm{par}} (G, M ) = \Ext^n_{\Kpar G} (B, M),
\]
that is, $ H^n (G, M )$ is the right derived functor of $(-)^{G_{\mathrm{par}}} \simeq \Hom_{\Kpar G} (B, -)$.
\end{definition} 
In order to compute a $\Kpar G$-projective resolution of $B$ we start with the following exact sequence in $\Rep \Kpar G$ given by

\[
0 \to IG \to  \Kpar G \stackrel{\epsilon}{\to} B \to 0
\]
where $IG = \Ker \epsilon$ is the {\it partial augmentation ideal} and $\epsilon (   [g_1] \cdots [g_n] ) = e_{g_1} e_{g_1 g_2} \cdots e_{g_1 g_2 \cdots g_n}$. 

\begin{lemma}\label{epsilon}
The morphism $\epsilon: \Kpar G \to B$ given by $\epsilon (   [g_1] \cdots [g_n] ) = e_{g_1} e_{g_1 g_2} \cdots e_{g_1 g_2 \cdots g_n}$ verifies the following properties:
\begin{itemize}
\item [(a)] $\epsilon ( x y) x = x \epsilon (y)$ for any $x,y \in \Kpar G$;
\item [(b)] $\epsilon (xy) = \epsilon (xy) \epsilon (x)$ for any $x,y \in \Kpar G$.
\end{itemize}
\end{lemma}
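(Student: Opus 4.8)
We have the map $\epsilon: \Kpar G \to B$ defined on the generators (products of the $[g_i]$) by
$$\epsilon([g_1]\cdots[g_n]) = e_{g_1} e_{g_1 g_2} \cdots e_{g_1 g_2 \cdots g_n}.$$

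This is the augmentation map. We need to prove two properties:
- (a) $\epsilon(xy)\,x = x\,\epsilon(y)$
- (b) $\epsilon(xy) = \epsilon(xy)\epsilon(x)$

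**Key observations before diving in**

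Let me recall key facts:
- $e_g = [g][g^{-1}]$, idempotents, commuting, central in $B$.
- The commutation relation $[g]e_h = e_{gh}[g]$, so $[g]e_h[g^{-1}] = e_{gh}e_g$... let me verify. Actually $[g]e_h[g^{-1}]$: use $[g]e_h = e_{gh}[g]$, so $[g]e_h[g^{-1}] = e_{gh}[g][g^{-1}] = e_{gh}e_g$.
- More generally, $\epsilon([g]) = e_g$ (single generator case, $n=1$).
- Note $\epsilon$ restricted to $B$ is the identity: $\epsilon(e_h) = \epsilon([h][h^{-1}]) = e_h \cdot e_{h h^{-1}} = e_h e_e = e_h$ since $e_e = 1$.

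**Verifying (a) on generators**

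Since everything is $K$-linear, it suffices to check on products of generators. The key identity I'd want to establish is: for any $x \in \Kpar G$,
$$x[g^{-1}] = \epsilon(x[g^{-1}]) \, x[g^{-1}] \quad \text{and related formulas},$$
but let me think about (a) directly.

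Actually, the cleaner approach: I'll prove a fundamental formula. For a generator $x = [g_1]\cdots[g_n]$, let $g = g_1\cdots g_n$ (the total product). I claim
$$\epsilon(x) \cdot x = x \quad \text{and} \quad x \cdot e_{g^{-1}}\text{-type relations}.$$

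Wait, let me verify $\epsilon(x) x = x$: we have $\epsilon(x) = e_{g_1}\cdots e_{g_1\cdots g_n}$. Is $e_{g_1\cdots g_k}[g_1]\cdots[g_n] = [g_1]\cdots[g_n]$? Using $e_{g_1}[g_1] = [g_1][g_1^{-1}][g_1] = [g_1]$ (relation (2) with $s=g_1, t=e$, or directly $[g][g^{-1}][g] = [g]$). And $e_{g_1 g_2}[g_1][g_2] = [g_1]e_{g_2}[g_2] = [g_1][g_2]$. In general $e_{g_1\cdots g_k}[g_1]\cdots[g_n]$: push the idempotent through. So indeed $\epsilon(x)\,x = x$ for a generator.

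**My proof plan for (a)**

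The approach is to prove both sides equal a common expression. I'll establish the auxiliary fact: for $x = [g_1]\cdots[g_m]$ with total $g = g_1\cdots g_m$, we have $\epsilon(x)x = x$. Then for (a), take $x$ a generator word ending with total element $g$, and $y = [h_1]\cdots[h_n]$. Compute $xy$ and its augmentation.

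Actually the crucial computation: $\epsilon(xy) = \epsilon(x) \cdot (g\text{-translate of }\epsilon(y))$ where the translate uses the commutation $e_{h}\mapsto e_{gh}$. Specifically, if $x$ has total product $g$, then
$$\epsilon(xy) = \epsilon(x)\cdot{}^{g}\epsilon(y), \quad \text{where } {}^g(e_{h_1}\cdots e_{h_n}) = e_{gh_1}\cdots e_{gh_n}.$$
This follows from the concatenation definition of $\epsilon$ and the partial sums.

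Then (a) becomes: $\epsilon(x)\,{}^g\epsilon(y)\,x = x\,\epsilon(y)$. Using $\epsilon(x)x = x$ and the commutation $({}^g e_h)\,x = x\,e_h$ (which is exactly $e_{gh}[g_1]\cdots[g_m] = [g_1]\cdots[g_m]e_h$, provable by pushing the idempotent right through the word), this reduces both sides to $x\,\epsilon(y)$.

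**My proof plan for (b)**

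Since $\epsilon(xy) \in B$ and $B$ is commutative, (b) says $\epsilon(xy) = \epsilon(xy)\epsilon(x)$, i.e. $\epsilon(x)$ acts as identity on $\epsilon(xy)$. With $\epsilon(xy) = \epsilon(x)\,{}^g\epsilon(y)$ and $\epsilon(x) = e_{g_1}\cdots e_{g_1\cdots g_m}$ idempotent, we get $\epsilon(xy)\epsilon(x) = \epsilon(x)^2\,{}^g\epsilon(y) = \epsilon(x)\,{}^g\epsilon(y) = \epsilon(xy)$. This uses only that $\epsilon(x)$ is a product of commuting idempotents, hence idempotent.

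**Main obstacle**

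The hard part is proving the factorization $\epsilon(xy) = \epsilon(x)\,{}^g\epsilon(y)$ and the commutation $({}^ge_h)x = xe_h$ cleanly on words — these are the technical hearts. Everything else is algebra in the commutative idempotent subalgebra $B$.
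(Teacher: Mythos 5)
Your argument is correct and essentially the paper's: both reduce to basis words $x=[g_1]\cdots[g_r]$, $y=[h_1]\cdots[h_s]$ and push the idempotents of $\epsilon(y)$ leftward through $x$ via the commutation $[g]e_h=e_{gh}[g]$, your intermediate factorization $\epsilon(xy)=\epsilon(x)\,{}^{g}\epsilon(y)$ being implicit in the paper's single displayed computation. One caveat (shared with the paper): $x$ occurs twice in each identity, so neither (a) nor (b) is linear in $x$ and the reduction to words does not follow from $K$-linearity as you claim --- indeed (b) fails for $x=[g]+[h]$, $y=1$ in characteristic $\neq 2$ --- so both proofs really establish the identities only for $x$ a word (and $y$ arbitrary, by linearity in $y$), which is what is used later.
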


\begin{proof}
Take $x =  [g_1] \cdots [g_r], y =  [h_1] \cdots [h_s]$.  Recall that $B$ is commutative, $e_g$ is idempotent and $[g] e_h = e_g e_{gh} [g]$.  Then we have that 
\begin{align*}
x \epsilon (y) & =  [g_1] \cdots [g_r] e_{h_1} e_{h_1 h_2 } \cdots e_{h_1 h_2 \cdots h_s}  \\
& =  e_{g_1} e_{g_1 g_2} \cdots e_{g_1 \cdots g_r h_1 h_2 \cdots h_s}  [g_1] \cdots [g_r]  \\
& =  \epsilon ( x y) x 
\end{align*} 
and
\begin{align*}
\epsilon (xy) \epsilon (x) & =  e_{g_1} e_{g_1 g_2} \cdots e_{g_1 \cdots g_r h_1 h_2 \cdots h_s} e_{g_1} e_{g_1 g_2} \cdots e_{g_1 \cdots g_r} \\
& =  \epsilon ( x y).
\end{align*} 
\end{proof}

Now we define the vector space of partial derivations as follows: 

\begin{align*}
\Der_par (G, M)  =   \{ \delta \in \Hom_{B} (\Kpar G, M)  : \ &   \delta( a . b ) = a  \delta ( b ) +   \delta (a \epsilon (b) )   \\ 
&  \mbox{ for any $ a,b \in \Kpar G$}  \}.
\end{align*}
In particular, we say that $\delta \in \Der_par (G,M)$ is inner if $\delta ([g]) = [g] m - e_g m$ for some $m \in M$.  We denote by $\Int_par (G, M) $ the space of inner partial derivations.

\begin{prop}
There is a natural isomorphism 
\[
 \Hom_{\Kpar G} (IG, -) \simeq \Der_par (G, - ).
 \]
\end{prop}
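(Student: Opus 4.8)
The plan is to construct the isomorphism directly from the short exact sequence
\[
0 \to IG \to \Kpar G \stackrel{\epsilon}{\to} B \to 0
\]
by restricting $\Kpar G$-module homomorphisms $IG \to M$ and encoding them as functions on $\Kpar G$. Given $\psi \in \Hom_{\Kpar G}(IG, M)$, I would first extend $\psi$ to a map $\delta : \Kpar G \to M$ by setting $\delta(x) = \psi(x - x\epsilon(x))$, noting that $x - x\epsilon(x) \in IG$ because $\epsilon(x - x\epsilon(x)) = \epsilon(x) - \epsilon(x)\epsilon(x) = 0$ (here I use that $\epsilon(x\epsilon(x)) = \epsilon(x)\epsilon(x)$, which follows from Lemma~\ref{epsilon}(b) and the fact that $\epsilon$ is the identity on $B$). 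Conversely, given $\delta \in \Der_par(G,M)$, its restriction to $IG$ should recover $\psi$, since for $x \in IG$ one has $\epsilon(x) = 0$ and the defining derivation identity collapses appropriately.

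The key technical point is to verify that the extended map $\delta$ lands in $\Der_par(G,M)$, that is, that it satisfies the Leibniz-type rule $\delta(a.b) = a\,\delta(b) + \delta(a\,\epsilon(b))$ and is a $B$-module map. Here the two identities of Lemma~\ref{epsilon} are exactly the tool needed: property (a), $\epsilon(xy)x = x\epsilon(y)$, governs how $\epsilon$ commutes past elements, and property (b), $\epsilon(xy) = \epsilon(xy)\epsilon(x)$, controls the idempotent bookkeeping. I would compute $\delta(ab) = \psi(ab - ab\epsilon(ab))$ and manipulate $ab - ab\epsilon(ab)$ inside $IG$ using $\Kpar G$-linearity of $\psi$, splitting it as a combination of $a\bigl(b - b\epsilon(b)\bigr)$ and an element built from $a\epsilon(b)$, so that $\psi$ distributes as $a\,\psi(b - b\epsilon(b)) + \psi\bigl(a\epsilon(b) - a\epsilon(b)\epsilon(a\epsilon(b))\bigr)$, which is precisely $a\,\delta(b) + \delta(a\epsilon(b))$. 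Naturality in the second variable is then routine: a morphism $M \to N$ in $\Rep\Kpar G$ intertwines both the $\Hom$ and the $\Der_par$ descriptions because everything is defined by post-composition.

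The main obstacle I anticipate is not conceptual but bookkeeping: confirming that the algebraic decomposition of $ab - ab\epsilon(ab)$ as an element of $IG$ into the two required summands is valid, i.e. that
\[
ab - ab\,\epsilon(ab) = a\bigl(b - b\,\epsilon(b)\bigr) + \bigl(a\,\epsilon(b) - a\,\epsilon(b)\,\epsilon(a\,\epsilon(b))\bigr)
\]
after applying the commutation rules, and that both summands genuinely lie in $IG$. The second summand requires checking $\epsilon\bigl(a\epsilon(b)\bigr) = \epsilon(a\epsilon(b))\epsilon(a\epsilon(b))$ via Lemma~\ref{epsilon}(b), while matching the first summand against $a\,\delta(b)$ uses the $\Kpar G$-linearity $\psi(ax) = a\psi(x)$ for $x \in IG$. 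Once these identities are in hand, bijectivity follows because extension and restriction are visibly mutually inverse: restricting the extended $\delta$ back to $IG$ returns $\psi$ (using $\epsilon|_{IG} = 0$), and extending the restriction of a derivation recovers the derivation (using the Leibniz rule to express $\delta(x)$ in terms of $\delta(x - x\epsilon(x))$).
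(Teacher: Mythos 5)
Your overall strategy---extend a $\Kpar G$-morphism $\psi : IG \to M$ to all of $\Kpar G$ by subtracting off its image under $\epsilon$, verify the Leibniz-type identity via Lemma~\ref{epsilon}, and check that extension and restriction are mutually inverse---is exactly the paper's. But the formula you extend by is wrong, and the error is fatal as written. You set $\delta(x) = \psi(x - x\epsilon(x))$, yet $x - x\epsilon(x)$ does not lie in $IG$: for $x = [g]$ one has $x\epsilon(x) = [g][g][g^{-1}]$, so $\epsilon(x\epsilon(x)) = e_g e_{g^2}$, whereas $\epsilon(x)\epsilon(x) = e_g$; hence $\epsilon(x - x\epsilon(x)) = e_g - e_g e_{g^2}$, which is nonzero whenever $e_g e_{g^2} \neq e_g$ (e.g.\ in $\Kpar \Z$ with $g=1$). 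The identity $\epsilon(x\epsilon(x)) = \epsilon(x)\epsilon(x)$ you invoke does not follow from Lemma~\ref{epsilon}(b); that lemma only yields $\epsilon(x\epsilon(x)) = \epsilon(x\epsilon(x))\epsilon(x)$, which is strictly weaker. Moreover $\epsilon(x)\epsilon(x) = \epsilon(x)$ holds only for monomials $[g_1]\cdots[g_n]$, not for general linear combinations, and in any case the assignment $x \mapsto x - x\epsilon(x)$ is quadratic in $x$, so your $\delta$ would not even be $K$-linear, let alone an element of $\Hom_B(\Kpar G, M)$. The same defect propagates to your proposed decomposition of $ab - ab\,\epsilon(ab)$, which is not an identity in $\Kpar G$.

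The repair is the paper's formula: $\hat f(x) = f\bigl(x - \epsilon(x)\cdot 1\bigr)$, where $\epsilon(x) \in B$ is regarded as an element of $\Kpar G$. This is linear and lands in $IG$ because $\epsilon$ restricts to the identity on $B$, so $\epsilon(x - \epsilon(x)) = 0$. The Leibniz rule then comes from the (linear) decomposition $xy - \epsilon(xy) = x\bigl(y - \epsilon(y)\bigr) + \bigl(x\epsilon(y) - \epsilon(x\epsilon(y))\bigr)$ together with the identity $\epsilon(x\epsilon(y)) = \epsilon(xy)$, and this is precisely where both parts of Lemma~\ref{epsilon} enter: $x\epsilon(y) = \epsilon(xy)x$ by (a), then $\epsilon(\epsilon(xy)x) = \epsilon(xy)\epsilon(x) = \epsilon(xy)$ by the $B$-linearity of $\epsilon$ and (b). Your description of the inverse (restriction to $IG$, using $\epsilon|_{IG} = 0$, and recovering $\delta$ from its restriction because $\delta$ vanishes on $B$) is sound once the extension formula is corrected.
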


\begin{proof}
The map  $$\Hom_{\Kpar G} (IG, M) \to  \Der_par (G, M) $$ given by
\[
f \mapsto \hat f, \ \mbox{with $ \hat f ( x) = f (  x-    \epsilon (  x ) . 1   )$}
\]
is a natural isomorphism of vector spaces.  It is clear that $\hat f \in \Hom_B (\Kpar G, M)$ since $f$ and $\epsilon$ are $B$-morphisms, and using Lemma \ref{epsilon} we get that $\hat f$ is a partial derivation:
\begin{align*}
\hat f (xy) & =   f (xy- \epsilon (xy) .1) =  f( xy - x \epsilon (y) +  x \epsilon (y)   - \epsilon (xy) ) \\
& =  x f( y -  \epsilon (y) )+  f ( x   \epsilon (y)  - \epsilon(xy)) \\
& =  x \hat f (y) +  \hat f (x \epsilon (y))
\end{align*}
because
\[ \epsilon(x \epsilon (y)) = \epsilon( \epsilon (xy) x) = \epsilon(xy) \epsilon (x) = \epsilon(xy).\] 
\end{proof}

\begin{teo}
Let $G$ be a group and $M$ an object in $\Kpar G$.  Then
\begin{align*}
\H^0_{\mathrm{par}} (G, M )  & =   M^{G_{\mathrm{par}}} = \Hom_{\Kpar G} (B, M);\\
\H^1_{\mathrm{par}} (G, M )  & = \Der_par (G, M) / \Int_par (G, M);\\
\H^n_{\mathrm{par}} (G, M )  & = \Ext^{n-1}_{\Kpar G} (IG, M), \  n \geq 2.
\end{align*}
\end{teo}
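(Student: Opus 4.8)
The plan is to read off all three identities from the long exact sequence in $\Ext_{\Kpar G}(-, M)$ associated to the short exact sequence
\[
0 \to IG \to \Kpar G \stackrel{\epsilon}{\to} B \to 0
\]
introduced above, together with the two representability results already established. The degree-zero statement is immediate: by definition $\H^0_{\mathrm{par}}(G, M) = \Ext^0_{\Kpar G}(B, M) = \Hom_{\Kpar G}(B, M)$, and the natural isomorphism $(-)^{G_{\mathrm{par}}} \simeq \Hom_{\Kpar G}(B, -)$ identifies this with $M^{G_{\mathrm{par}}}$.

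For the higher degrees I would apply $\Hom_{\Kpar G}(-, M)$ to the displayed sequence and use the resulting long exact sequence of $\Ext$ groups. The crucial simplification is that $\Kpar G$ is free, hence projective, as a module over itself, so $\Ext^n_{\Kpar G}(\Kpar G, M) = 0$ for every $n \geq 1$. For $n \geq 2$ the relevant segment reads
\[
\Ext^{n-1}_{\Kpar G}(\Kpar G, M) \to \Ext^{n-1}_{\Kpar G}(IG, M) \to \Ext^n_{\Kpar G}(B, M) \to \Ext^n_{\Kpar G}(\Kpar G, M),
\]
and since both outer terms vanish (here $n-1 \geq 1$), the connecting map yields the isomorphism $\Ext^{n-1}_{\Kpar G}(IG, M) \simeq \Ext^n_{\Kpar G}(B, M) = \H^n_{\mathrm{par}}(G, M)$, which is the third identity.

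The degree-one case is the one requiring genuine work. The bottom of the long exact sequence is
\[
\Hom_{\Kpar G}(\Kpar G, M) \stackrel{\iota^*}{\to} \Hom_{\Kpar G}(IG, M) \to \Ext^1_{\Kpar G}(B, M) \to 0,
\]
where $\iota \colon IG \hookrightarrow \Kpar G$ is the inclusion and the final zero is again $\Ext^1_{\Kpar G}(\Kpar G, M)$. Hence $\H^1_{\mathrm{par}}(G, M) \simeq \Hom_{\Kpar G}(IG, M) / \mathrm{Im}\,\iota^*$. Using the isomorphism $\Hom_{\Kpar G}(IG, -) \simeq \Der_par(G, -)$, it remains to check that $\mathrm{Im}\,\iota^*$ corresponds precisely to $\Int_par(G, M)$. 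I would do this by tracing an element through the identifications: a map in $\Hom_{\Kpar G}(\Kpar G, M)$ is evaluation-at-$1$, namely $x \mapsto x \cdot m$ for a unique $m \in M$, so $\iota^*$ sends it to $f(x) = x \cdot m$ on $IG$; feeding this into $f \mapsto \hat f$ with $\hat f(x) = f(x - \epsilon(x) \cdot 1)$ and evaluating at $x = [g]$ gives $\hat f([g]) = [g]m - \epsilon([g])\, m = [g] m - e_g m$, which is exactly the defining form of an inner partial derivation. Thus $\mathrm{Im}\,\iota^*$ is carried onto $\Int_par(G, M)$ and the quotient is $\Der_par(G, M)/\Int_par(G, M)$.

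The main obstacle is precisely this last bookkeeping step: one must verify that the explicit isomorphism $\Hom_{\Kpar G}(IG, M) \simeq \Der_par(G, M)$ carries the image of the restriction map onto the inner derivations and nothing more. Concretely this reduces to the identity $\hat f([g]) = [g]m - e_g m$ computed above, together with the observation that $\{[g] : g \in G\}$ generates $\Kpar G$, so that a partial derivation is determined by its values on the $[g]$; the naturality of both representability isomorphisms then guarantees that the resulting identification is natural in $M$.
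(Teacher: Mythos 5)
Your proposal is correct and follows essentially the same route as the paper: the long exact sequence in $\Ext_{\Kpar G}(-,M)$ from $0 \to IG \to \Kpar G \to B \to 0$, vanishing of $\Ext^n(\Kpar G, M)$ by projectivity, and the identification of the cokernel of the restriction map with $\Der_par(G,M)/\Int_par(G,M)$. Your explicit computation $\hat f([g]) = [g]m - e_g m$ is exactly the content of the commutative diagram the paper uses for the degree-one case.
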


\begin{proof}
Associated to the short exact sequence 
\[
0 \to IG \to  \Kpar G \stackrel{\epsilon}{\to} B \to 0
\] there is a long exact sequence
{\small
\[ \xymatrix{
0 \ar[r] & \Hom_{\Kpar G} (B, M) \ar[r] & \Hom_{\Kpar G} (\Kpar G, M) \ar[r] & \Hom_{\Kpar G} (IG, M)    \\
\ar[r] & Ext^1_{\Kpar G} (B, M)\ar[r]  & Ext^1_{\Kpar G} (\Kpar G, M)\ar[r] & Ext^1_{\Kpar G} (IG, M)   \\
\ar[r] & Ext^2_{\Kpar G} (B, M)\ar[r]  & Ext^2_{\Kpar G} (\Kpar G, M)\ar[r] & \dots & }\] 
}Since $\Kpar G$ is projective, we have that $ \Ext_{\Kpar G}^n (\Kpar G, M) = 0$ for any $n \in \N$, so 
$$\H^n_{\mathrm{par}} (G, B) = \Ext^{n-1}_{\Kpar G} (IG, M)$$ for any $n \geq 2$.
Finally $\H^1_{\mathrm{par}} (G, B)$ is the cokernel of the map $$M \simeq \Hom_{\Kpar G} (\Kpar G, M)  \to \Hom_{\Kpar G} (IG, M)$$ and hence the commutative diagram
{\small \[\xymatrix{
\Hom_{\Kpar G} (\Kpar G, M) \ar[r] \ar[d]^{\cong} & \Hom_{\Kpar G} (IG, M) \ar@{->>}[r] \ar[d]^{\cong} & Ext^1_{\Kpar G} (B, M)  \ar@{-->}[d] \\
\Int_par(G,M) \ar[r] & \Der_par (G,M) \ar@{->>}[r] & \Der_par (G, M) / \Int_par (G, M) 
}\] }yields the desired result.
\end{proof}

\section{Spectral sequence}

In this section we will show that there exists a Grothendieck spectral sequence  relating cohomology of partial smash products with partial group cohomology and algebra cohomology:

\begin{teo}\label{espectral}
For any $A \times_\alpha G$-bimodule $M$ there is a third quadrant cohomology spectral sequence starting with $E_2$ and converging to $H^*(A \times_\alpha G, M)$:
$$ E_2^{p,q} = H_{\mathrm{par}}^q(G, H^p(A,M)) \Rightarrow H^{p+q} (A \times_\alpha G, M).$$
\end{teo}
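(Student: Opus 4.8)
The plan is to realise the Hochschild cohomology of $\Lambda := A\times_\alpha G$ as the right derived functors of a composite of two functors and then invoke the Grothendieck spectral sequence. Write $\Lambda^{e}=\Lambda\otimes\Lambda^{op}$ and $A^{e}=A\otimes A^{op}$, so that $H^{p}(A,M)=\Ext^{p}_{A^{e}}(A,M)$ and $H^{n}(\Lambda,M)=\Ext^{n}_{\Lambda^{e}}(\Lambda,M)$. The two functors are the inner functor $F=\Hom_{A^{e}}(A,-)$ and the outer functor $\mathcal G=(-)^{G_{\mathrm{par}}}\simeq\Hom_{\Kpar G}(B,-)$, and the heart of the argument is the natural identification
\[
\Hom_{\Lambda^{e}}(\Lambda,M)\;\simeq\;\Hom_{\Kpar G}\bigl(B,\Hom_{A^{e}}(A,M)\bigr),
\]
that is $\mathcal G\circ F\simeq\Hom_{\Lambda^{e}}(\Lambda,-)$, whose right derived functor is $H^{*}(\Lambda,-)$.

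First I would make $F$ take values in $\Rep\Kpar G$. Viewing $A\otimes A^{op}$ with the partial representation $g\mapsto\pi_{0}(g)\otimes\pi_{0}^{op}(g)$ described in Section~2, every $\Lambda^{e}$-module $M$ carries the conjugation operators $[g]\cdot m=\pi_{0}(g)\,m\,\pi_{0}(g^{-1})=(u_{g}\#g)\,m\,(u_{g^{-1}}\#g^{-1})$; using the covariance relation $\phi_{0}(\alpha_{g}(au_{g^{-1}}))=\pi_{0}(g)\phi_{0}(a)\pi_{0}(g^{-1})$ together with $\pi_{0}(g)\pi_{0}(g^{-1})=\phi_{0}(u_{g})$ one checks that these operators preserve $\Hom_{A^{e}}(A,M)=M^{A}$ and define a partial representation, so $F(M)$ is an object of $\Rep\Kpar G$. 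The degree-zero identity is then the computation that, for $m\in M^{A}$, the two conditions $m\in(M^{A})^{G_{\mathrm{par}}}$ (i.e. $[g]\cdot m=e_{g}\cdot m$ for all $g$) and $(u_{g}\#g)\,m=m\,(u_{g}\#g)$ for all $g$ are equivalent; since $\Lambda$ is generated by $\phi_{0}(A)$ and the elements $u_{g}\#g$, this says exactly $M^{\Lambda}=(M^{A})^{G_{\mathrm{par}}}$, yielding the displayed isomorphism. This is a concrete manipulation with the product formula and the central idempotents $u_{g}$, and I expect it to go through routinely. As shown in Section~3 the outer functor $\mathcal G$ is left exact with $R^{q}\mathcal G=H^{q}_{\mathrm{par}}(G,-)$.

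Next I would identify the derived functors of $F$, computed in $\Lambda^{e}$-modules, with ordinary Hochschild cohomology $H^{p}(A,-)$. The key input is that $\Lambda=\bigoplus_{g}Au_{g}\#g$ is projective both as a left and as a right $A$-module: each summand $Au_{g}\#g$ is isomorphic to $D_{g}=Au_{g}$, a direct summand of $A$ because $u_{g}$ is a central idempotent. Hence $\Lambda^{e}$ is projective over $A^{e}$, the induction functor $\Lambda^{e}\otimes_{A^{e}}-$ is exact, and its right adjoint, restriction $\mathrm{res}\colon\Lambda^{e}\text{-}\mathrm{Mod}\to A^{e}\text{-}\mathrm{Mod}$, preserves injectives. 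Therefore an injective resolution of $M$ over $\Lambda^{e}$ restricts to an injective resolution over $A^{e}$, and applying $\Hom_{A^{e}}(A,-)$ gives $R^{p}F(M)=\Ext^{p}_{A^{e}}(A,M)=H^{p}(A,M)$, with the partial $G$-action carried along functorially.

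The main obstacle is the remaining Grothendieck hypothesis: that $F$ carries injective $\Lambda^{e}$-modules to $(-)^{G_{\mathrm{par}}}$-acyclic objects of $\Rep\Kpar G$. I would try to deduce this from the stronger statement that $F$ preserves injectives, by exhibiting $F$ as a right adjoint to an exact functor; the natural candidate left adjoint is induction along the algebra map $\Kpar G\to\Lambda$, $[g]\mapsto u_{g}\#g$, whose exactness should again reduce to the projectivity of $\Lambda$. Should the required flatness of $\Lambda$ over $\Kpar G$ be unclear, the acyclicity can be checked directly instead: every injective $\Lambda^{e}$-module is a direct summand of a coinduced module $\Hom_{A^{e}}(\Lambda^{e},J)$ with $J$ injective over $A^{e}$, and on such modules $F$ produces a coinduced-type object of $\Rep\Kpar G$, which is $(-)^{G_{\mathrm{par}}}$-acyclic. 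This acyclicity is the one genuinely delicate point of the proof. Granting it, Grothendieck's theorem applied to $\Lambda^{e}\text{-}\mathrm{Mod}\xrightarrow{F}\Rep\Kpar G\xrightarrow{\mathcal G}\Rep K$ produces the spectral sequence
\[
E_{2}^{p,q}=(R^{q}\mathcal G)(R^{p}F)(M)=H^{q}_{\mathrm{par}}\bigl(G,H^{p}(A,M)\bigr)\ \Longrightarrow\ R^{p+q}(\mathcal G F)(M)=H^{p+q}(\Lambda,M),
\]
which is exactly the asserted convergence.
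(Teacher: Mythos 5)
Your overall architecture coincides with the paper's: both realise $\Hom_{(A\times_\alpha G)^e}(A\times_\alpha G,-)$ as the composite of the inner functor $F_1=\Hom_{A^e}(A,-)$, landing in $\Rep\Kpar G$ via the conjugation operators $m\mapsto (u_g\#g)\,m\,(u_{g^{-1}}\#g^{-1})$, with the outer functor $F_2=\Hom_{\Kpar G}(B,-)\simeq(-)^{G_{\mathrm{par}}}$, and then invoke the Grothendieck spectral sequence. Your degree-zero identification $M^{A\times_\alpha G}=(M^A)^{G_{\mathrm{par}}}$ is correct and is the case $X=B$ of the natural isomorphism the paper proves; and your observation that $A\times_\alpha G=\oplus_g D_g\#g$ is projective over $A$ on both sides (each $D_g\#g$ being isomorphic as a left, resp.\ right, $A$-module to the direct summand $D_g$, resp.\ $D_{g^{-1}}$, of $A$), so that restriction from $(A\times_\alpha G)^e$ to $A^e$ preserves injectives and $R^pF_1=H^p(A,-)$, makes explicit a point the paper leaves implicit. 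That part is sound.

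The genuine gap is exactly where you flag it, and neither of your two routes closes it. The correct left adjoint of $F_1$ is not ``induction along $\Kpar G\to A\times_\alpha G$'' (which does not even land in $\Rep(A\times_\alpha G)^e$) but the functor $X\mapsto X\otimes_B(A\times_\alpha G)$, where $B\subset\Kpar G$ is the idempotent-generated subalgebra and $(A\times_\alpha G)^e$ acts through an operator $\Delta$ whose well-definedness over $\otimes_B$ already requires a computation you have not set up. More importantly, the exactness of this left adjoint does not reduce to projectivity of $A\times_\alpha G$ over $A$ or over $\Kpar G$: the tensor product is taken over $B$, and what is needed is flatness over $B$. The paper supplies this with its one genuinely nonobvious lemma: every finitely generated ideal of $B$ (a semigroup algebra of a commutative idempotent semigroup, i.e.\ a semilattice) is principal and generated by an idempotent --- proved via Solomon's orthogonal-idempotent basis for finite semilattice algebras --- whence every $B$-module is flat and $-\otimes_B(A\times_\alpha G)$ is exact. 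This ingredient is absent from your proposal, and your fallback via coinduced modules $\Hom_{A^e}((A\times_\alpha G)^e,J)$ does not avoid it: after adjunction one must still show that $\Hom_{A^e}((A\times_\alpha G)\otimes_A(A\times_\alpha G),J)$ is $(-)^{G_{\mathrm{par}}}$-acyclic, and ``coinduced-type'' is not by itself a reason for acyclicity over $\Kpar G$. Until the flatness of $B$-modules (or some substitute) is established, the acyclicity hypothesis of Grothendieck's theorem remains unproved.
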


\medskip
We start by studying the behavior of the functors that are considered in the mentioned spectral sequence. For any pair of objects 
$${ \phi_X: \Kpar G} \to \End_K(X) \ \in \ \Rep \Kpar G$$ and $$\Phi_M:  (A \times_\alpha G)^e \to \End_K(M) \ \in \ \Rep (A \times_\alpha G)^e,$$ we consider the object $$\Delta: (A \times_\alpha G)^e \to \End_K(X \otimes_B M) \ \in \ \Rep (A \times_\alpha G)^e$$ given by
$$\Delta(au_g \# g \otimes bu_h \#h)(x \otimes m)= \phi_X([g])(x) \otimes \Phi_M(au_g \# g \otimes bu_h \#h)(m)$$
which is well defined since
\begin{align}
\Delta(au_g \# g & \otimes bu_h \#h)(\phi_X(e_s)(x) \otimes m) \nonumber  \\
& =  \phi_X([g])(\phi_X(e_s)(x)) \otimes \Phi_M(au_g \# g \otimes bu_h \#h)(m)  \nonumber \\ 
&= \phi_X([g] e_s)(x) \otimes \Phi_M(au_g \# g \otimes bu_h \#h)(m)  \nonumber  \\
&= \phi_X(e_{gs} [g])(x) \otimes \Phi_M(au_g \# g \otimes bu_h \#h)(m)  \nonumber  \\
&= \phi_X(e_{gs}) \phi_X( [g])(x) \otimes \Phi_M(au_g \# g \otimes bu_h \#h)(m)  \\
&= \phi_X([g])(x) \otimes \Phi_M (u_{gs} \# e \otimes 1) \Phi_M(au_g \# g \otimes bu_h \#h)(m)  \\
&= \phi_X([g])(x) \otimes \Phi_M ((u_{gs} \# e)(au_g \# g)   \otimes bu_h \#h)(m)   \\
&= \phi_X([g])(x) \otimes \Phi_M ((au_g \# g)(u_{s} \# e)   \otimes bu_h \#h)(m)   \\
&= \phi_X([g])(x) \otimes  \Phi_M(au_g \# g \otimes bu_h \#h) \Phi_M (u_{s} \# e \otimes 1)(m) \nonumber  \\
&= \Delta(au_g \# g \otimes bu_h \#h)(x  \otimes \Phi_M (u_{s} \# e \otimes 1)(m)).  \nonumber 
\end{align} 
From  $(1)$ to $(2)$  we use that $B$ is a commutative ring. 
 From $(3)$ to $(4)$ we use the equality  $(u_{gs} \#  e)(a u_g \# g) = (a u_g \# g)(u_{s} \# e)$ which can be deduced as follows: $(u_{gs} \#  e)(a u_g \# g) = u_{gs} a u_{g} \# g$ and, on the other hand,  
 \begin{align*}
 (a u_g \# g)(u_{s} \# e) & = a u_g \alpha_{g}(u_{s} u_{g^{-1}}) u_{g} \# g \\
 & = a u_g \alpha_{g}(u_{s} u_{g^{-1}})  \# g.
 \end{align*}
 Now  $u_s u_{g^{-1}} \in D_{s} D_{g^{-1}} = D_s \cap D_{g^{-1}} \stackrel{\alpha_{g}}\rightarrow D_{gs} \cap D_{g} = D_{gs} D_{g}.$ So, $\alpha_{g}(u_{s} u_{g^{-1}}) = u_{gs}u_{g}$ and then $a u_g \alpha_{g}(u_{s} u_{g^{-1}})  \# g = a u_{g} u_{gs}  \# g$.  \\

In particular, if we take $M= A \times_\alpha G$ we have that $X \otimes_B (A \times_\alpha G)$ is an object in $ \Rep (A \times_\alpha G)^e$.

On the other hand, it is clear that $M$ can be viewed as an object in $\Rep A^e$, where $\phi_M: A^e \to \End_K(M)$ is the composition
$$ A^e  \stackrel{\phi_0 \otimes \phi_0}{\longrightarrow} (A \times_\alpha G)^e \stackrel{\Phi_M}{\longrightarrow} \End_K(M)$$
given by
$$ a \otimes b \mapsto a u_e   \# e  \otimes b u_e \# e  \mapsto \Phi_M ( a u_e  \# e \otimes b u_e  \# e)$$
and then we can consider the object
$$\pi : G \to \End_K ( \Hom_{A^e}(A,M)   ) \ \in \ \Rep \Kpar G$$
given by
$$\pi(g) (f)(x) = \Phi_M (u_g \# g \otimes u_{g^{-1}}\# g^{-1} ) f (\alpha_{g^{-1}}( u_g x)) . $$
It is clear that $\pi(g)(f) \in  \Hom_{A^e}(A,M)$ since
\begin{align}
&\pi(g) (f) (axb) \nonumber \\
 & =  \Phi_M (u_g \# g \otimes u_{g^{-1}}\# g^{-1} ) f( \alpha_{g^{-1}} (u_g axb))  \nonumber \\
& =    \Phi_M (u_g \# g  \otimes u_{g^{-1}}\# g^{-1} ) f( \alpha_{g^{-1}} (u_g a) ( \alpha_{g^{-1}} (u_g x) \alpha_{g^{-1}} (u_g b)) \nonumber\\
& =    \Phi_M (u_g \# g  \otimes u_{g^{-1}}\# g^{-1} ) \Phi_M (\alpha_{g^{-1}} (u_g a) u_e \# e  \otimes \alpha_{g^{-1}} (u_g b) u_e \# e )  \nonumber \\
& \ \ \ f( \alpha_{g^{-1}} (u_g x)) \nonumber \\
& =    \Phi_M ((u_g \# g) (\alpha_{g^{-1}} (u_g a) u_e \# e) \otimes (\alpha_{g^{-1}} (u_g b) u_e \# e)   (u_{g^{-1}} \# g^{-1})) \nonumber\\
& \ \ \ f( \alpha_{g^{-1}} (u_g x))  \\
& =    \Phi_M (   (a u_e  \# e) (u_g \# g)     \otimes     (u_{g^{-1}} \# g^{-1}) (b u_e \# e)   )   f( \alpha_{g^{-1}} (u_g x))   \\
& =    \Phi_M (   a u_e \# e     \otimes     b u_e \# e   )
\Phi_M (  u_g \# g    \otimes     u_{g^{-1}} \# g^{-1}  )
f( \alpha_{g^{-1}} (u_g x)) \nonumber\\
& =    \Phi_M (   a u_e \# e     \otimes     b u_e \# e   )  \pi(g) (f) (x) .  \nonumber
\end{align}
From $(5)$ to $(6)$ we use that
\begin{align*}
(u_g \# g) (\alpha_{g^{-1}} (u_g a) u_e \# e) & = u_g \alpha_{g}(\alpha_{g^{-1}} (u_{g} a) u_{g^{-1}}) u_{g} \# g  \\
& = u_g \alpha_{g}(\alpha_{g^{-1}} (u_{g} a) u_{g^{-1}})  \# g \\
& = u_g \alpha_{g}(\alpha_{g^{-1}} (u_{g} a) )  \# g \\
& = u_{g} a \# g = (au_e \# e) (u_g \# g)
\end{align*}
and analogously, 
\begin{align*}
(\alpha_{g^{-1}} (u_g b) u_e \# e)   (u_{g^{-1}} \# g^{-1}) & = \alpha_{g^{-1}}(u_{g}b) u_{g^{-1}}\# g^{-1} \\
& = u_{g^{-1}} \alpha_{g^{-1}}( b u_e u_g) u_{g^{-1}} \# g^{-1} \\
& = (u_{g^{-1}} \# g^{-1})(b u_e \# e).
\end{align*}  \\

This map $\pi$ induces a partial action since:
$$
\pi(e) (f) (x ) =  \Phi_M (u_e \# e  \otimes u_e \# e) f (\alpha_{e^{-1}}( x)) = f(x) $$
and 
$$\pi (g) \pi (h) \pi (h^{-1}) (f) (x)  = \pi (gh)  \pi (h^{-1}) (f) (x)$$
because
\begin{align}
& \pi (g) \pi (h) \pi (h^{-1}) (f) (x)  \nonumber \\
 & = \Phi_M (u_{g} \# g \otimes u_{g^{-1}}\# g^{-1} ) ( \pi (h) \pi(h^{-1})  f) (\alpha_{g^{-1}}( u_{g} x) ) \nonumber \\
& =  \Phi_M (u_{g} \# g \otimes u_{g^{-1}}\# g^{-1} )  \Phi_M (u_{h} \# h \otimes u_{h^{-1}}\# h^{-1} ) \Phi_M (u_{h^{-1}} \# h^{-1} \otimes u_{h}\# h ) \nonumber \\
& \ \ .  \ f (\alpha_{h}(u_{h^{-1}} (\alpha_{h^{-1}}( u_{h} (\alpha_{g^{-1}}( u_{g} x)))))) \\
& =  \Phi_M (u_{gh} u_{g}  \# g \otimes u_h u_{g^{-1}}\# g^{-1} ) f (\alpha_{h} (\alpha_{h^{-1}}( u_{h} (\alpha_{g^{-1}}( u_{g} x)))))    \\
& = \Phi_M (u_{gh} u_{g}  \# g \otimes u_h u_{g^{-1}}\# g^{-1} ) f (\alpha_{h} (\alpha_{h^{-1}}( u_{h} u_{g^{-1}} (\alpha_{g^{-1}}( u_{g} x)))))  \\
 & = \Phi_M (u_{gh} u_{g}  \# g \otimes u_h u_{g^{-1}}\# g^{-1} ) f (\alpha_{h} (\alpha_{h^{-1}}( \alpha_{g^{-1}}(u_{gh} u_{g}) (\alpha_{g^{-1}}( u_{g} x)))))  \\
& =  \Phi_M (u_{gh} u_{g}  \# g \otimes u_h u_{g^{-1}}\# g^{-1} ) f (\alpha_{h} (\alpha_{h^{-1}}( \alpha_{g^{-1}}(u_{gh} u_{g}  x))))  \\
& = \Phi_M (u_{gh} u_{g}  \# g \otimes u_h u_{g^{-1}}\# g^{-1} ) f (\alpha_{h} ( \alpha_{(gh)^{-1}}(u_{gh} u_{g}  x)))) \\
 & = \Phi_M (u_{gh} u_{g}  \# g \otimes u_h u_{g^{-1}}\# g^{-1} ) f (\alpha_{h} ( \alpha_{(gh)^{-1}}(u_{gh} u_{g} ) \alpha_{(gh)^{-1}} (u_{gh} x))) \nonumber \\
 & = \Phi_M ((u_{gh} \# gh)(u_{h^{-1}} \# h^{-1}) \otimes (u_h \# h)(u_{(gh)^{-1}}\# (gh)^{-1} ) \nonumber \\
 & \ \ .  \ f (\alpha_{h} ( u_{(gh)^{-1}} u_{h^{-1}} \alpha_{(gh)^{-1}} (u_{gh} x)) \nonumber \\
 & = \Phi_M ((u_{gh} \# gh) \otimes (u_{(gh)^{-1}}\# (gh)^{-1} ) \Phi_M ( (u_{h^{-1}} \# h^{-1}) \otimes (u_h \# h) ) \nonumber \\
 & \ \ .  \  f (\alpha_{h} (  u_{h^{-1}} \alpha_{(gh)^{-1}} (u_{gh} x)) \nonumber \\
& =  \pi(gh) \pi(h^{-1}) f(x). \nonumber
\end{align}
From $(7)$ to $(8)$  we use that $(u_g \# g) (u_h \# h) (u_{h^{-1} }\# h^{-1}) = (u_{gh} \# gh) (u_{h^{-1}} \# h^{-1})$, from $(9)$ to $(10)$ 
we use that $\alpha_{g^{-1}} (u_{gh} u_g) = u_h u_{g^{-1}}$ and from $(11)$ to $(12)$ we use that $\alpha_{(gh)^{-1}}(u_{gh} u_g) = \alpha_{h^{-1}}(\alpha_{g^{-1}}(u_{gh} u_{g}))$. \\

Now we consider the natural transformations
\[ \xymatrix{
\Hom_{\Kpar G} ( - , \Hom_{A^e} (A, M)) \ar@<-.5ex>[rr]_\Gamma &  & \ar@<-.5ex>[ll]_\Lambda \Hom_{(A \times_\alpha G)^e} ( - \otimes_B (A \times_\alpha G), M)
}\]
defined as follows: given $H \in \Hom_{\Kpar G} ( X , \Hom_{A^e} (A, M)) $, the map $\Gamma_X (H)$ is defined by
$$\Gamma_X (H)( x \otimes  au_g \# g )  :=
 \Phi_M (1 \otimes au_g \# g  ) H( x) (1) $$
and given $T \in  \Hom_{(A \times_\alpha G)^e} ( X \otimes_B (A \times_\alpha G) , M)$, the map $\ \Lambda_X (T)$ is defined by
$$\Lambda_X (T)(x) (a) : =  T( x \otimes au_e \#e ).$$
The map $\Gamma_X (H)$ is well defined since
\begin{align*}
  \Gamma_X (H) & ( e_h . x \otimes au_g \# g ) \\
 & =  \Phi_M (1 \otimes au_g \# g  ) H(e_h. x) (1)  \\
& =  \Phi_M (1 \otimes au_g \# g  ) (\phi(e_h) )H( x) (1)  \\
 & =  \Phi_M (1 \otimes au_g \# g  ) (\pi(h) \pi(h^{-1})H( x) (1)  \\
 & = \Phi_M (1 \otimes au_g \# g ) \Phi_M(u_h \#h \otimes  u_{h^{-1}} \#h^{-1}) \Phi_M(u_{h^{-1}} \#h^{-1} \otimes  u_{h} \#h) \\
 &  \ \ \ H (x)  (\alpha_h (u_{h^{-1}} \alpha_{h^{-1}}(u_h 1)))         \\
 & =  \Phi_M (1 \otimes au_g \# g ) \Phi_M(( u_h \#h) (u_{h^{-1}} \#h^{-1}) \otimes ( u_{h} \#h ) (u_{h^{-1}} \#h^{-1}))  \\
 & \ Ê\ \  H (x) (u_h)         \\
 & =  \Phi_M (1 \otimes au_g \# g ) \Phi_M(( u_h \#e)  \otimes ( u_{h} \#e ) )  H (x) (u_h)         \\
 & =  \Phi_M (u_h \# e \otimes ( u_h \#e) (au_g \# g) )  H (x) (u_h 1)         \\
& =  \Phi_M (1 \otimes (u_h \# e) (au_g \# g)  )  H  (x) ( u_h( u_h 1) )         \\
& =  \Phi_M (1 \otimes (u_h \# e) (u_h \# e) (au_g \# g)  )  H  (x) ( 1) )         \\
& =  \Phi_M (1 \otimes (u_h \#e)(au_g \# g) )  H  (x) ( 1 )         \\
& =   \Gamma_X (H)(x \otimes (u_h \#e)(a u_g \# g ) ) 
\end{align*} 
and
 \begin{align*}
  \Gamma_X (H) & (\Delta (cu_h\# h \otimes du_s \#s) (x \otimes au_g \# g )) =  \\
& =  \Gamma_X (H) (\pi_X(h) (x) \otimes  (cu_h\# h)(au_g \# g )(du_s \#s)  )  \\
& =   \Phi_M ( 1 \otimes (cu_h\# h)(au_g \# g )(du_s \#s)  ) H(\pi_X(h)( x)) (1)  \\
& = \Phi_M ( 1 \otimes (cu_h\# h)(au_g \# g )(du_s \#s)  ) \Phi_M(u_h \# h \otimes u_{h^{-1}} \# h^{-1} ) H(x)(1) \\
& =  \Phi_M ( u_h \# h  \otimes (\alpha_{h^{-1}}(cu_h)\# e)(au_g \# g )(du_s \#s )) H(x)(1)  \\
& =  \Phi_M ( u_h \# h  \otimes (au_g \# g )(du_s \#s )) H(x)(\alpha_{h^{-1}}(cu_h))  \\
& =  \Phi_M ( u_h \# h  \otimes (au_g \# g )(du_s \#s )) \Phi(\alpha_{h^{-1}}(cu_h)\# e \otimes (u_e \# e) ) H(x)(1)  \\
& =  \Phi_M ( ( u_h \# h)(\alpha_{h^{-1}}(cu_h)\# e)  \otimes (au_g \# g )(du_s \#s )) H(x)(1)\\
& =  \Phi_M ( ( u_h  \alpha_{h}(u_{h^{-1}} \alpha_{h^{-1}}(cu_h)) \# h)  \otimes (au_g \# g )(du_s \#s )) H(x)(1)  \\
& =   \Phi_M ( c u_h  \# h  \otimes (au_g \# g )(du_s \#s )) H(x)(1) \\
& =   \Phi_M ( c u_h  \# h  \otimes du_s \#s ) \Phi_M (1 \otimes au_g \# g ) H(x)(1) \\
& =   \Phi_M ( c u_h \# h  \otimes (du_s \#s )) \Gamma_X (H)  (x \otimes au_g \# g )).
\end{align*} 
On the other hand, $\Lambda_X (T) \in \Hom_{\Kpar G} ( X, \Hom_{A^e} (A, M))$ because 
\begin{align*}\Lambda_X (T)(x) (cad) & =   T( x \otimes (cu_e\#e)(au_e \#e) (du_e\#e))\\
 & =  \Phi_M(cu_e\#e \otimes du_e\#e)T(x  \otimes  au_e \#e )\\
 & =  \Phi_M(cu_e\#e \otimes du_e\#e) \Lambda_X (T)(x) (a)
\end{align*}
and
\begin{align*}
\Lambda_X (T)&(\pi_X(g) (x)) (a) \\
& =   T(  \pi_X(g)( x) \otimes au_e \#e) =
 T(  (\pi_X(g)\pi_X(g^{-1})\pi_X(g))( x) \otimes au_e \#e ) \\
&=  T(   (\pi_X(e_g)\pi_X(g))( x) \otimes au_e \#e) \\
&=  T(\pi_X(g)( x) \otimes (u_g \#e) (au_e \#e)   ) =  T( \pi_X(g)( x) \otimes au_g \#e   )\\
&=  T(\pi_X(g)( x) \otimes (u_g \#g)(\alpha_{g^{-1}}(au_g) \#e) (u_{g^{-1}} \#g^{-1}) )\\
&= \Phi_M (u_g\#g \otimes u_{g^{-1}}\#g^{-1}) T( x \otimes \alpha_{g^{-1}}(au_g) \#e ) \\
&=  \Phi_M (u_g\#g \otimes u_{g^{-1}}\#g^{-1}) \Lambda_X (T)(x) (\alpha_{g^{-1}}(au_g)) = \pi(g) \Lambda_X (T)(x) (a).
\end{align*}
Moreover, $\Lambda \circ \Gamma = \id$ because
$$\Lambda_X (\Gamma_X (H)) (x) (a) = \Gamma_X (H) ( x \otimes au_e \#e )= H (x) (a)$$
and $\Gamma \circ \Lambda = \id$ because
\begin{align*}
\Gamma_X (\Lambda_X (T) )(x \otimes au_g \# g ) &= \Phi_M (1 \otimes au_g \# g ) \Lambda_X (T)(x)(1) \\  
&= \Phi_M (1 \otimes au_g \# g ) T( x \otimes u_e \# e ) =
 T (x \otimes au_g \# g ).\end{align*}

The previous facts lead us to the following two propositions.

\begin{prop}
The functors $$\Hom_{\Kpar G} ( - , \Hom_{A^e} (A, M)) \ \mbox{and} \ \Hom_{(A \times_\alpha G)^e} ( - \otimes_B  (A \times_\alpha G), M)$$ are naturally isomorphic.
\end{prop}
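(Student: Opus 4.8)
The plan is to read the statement directly off the computations that immediately precede it: for every object $X$ of $\Rep \Kpar G$ we have already produced the two maps $\Gamma_X$ and $\Lambda_X$ and checked all the properties needed to make them mutually inverse isomorphisms, so the proof is a matter of packaging these verifications and supplementing them with the (formal) naturality in $X$.

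First I would recall what has been established. The map $\Gamma_X(H)$ was shown to be well defined in the two required senses: that it descends to the quotient $X \otimes_B (A \times_\alpha G)$, which is the computation checking $\Gamma_X(H)(e_h \cdot x \otimes au_g \# g) = \Gamma_X(H)(x \otimes (u_h \# e)(au_g \# g))$, and that it is a morphism of $(A \times_\alpha G)^e$-modules, via the computation with $\Delta$. Dually, $\Lambda_X(T)$ was shown to lie in $\Hom_{\Kpar G}(X, \Hom_{A^e}(A,M))$. Since the defining formulas are visibly $K$-linear in $H$ and in $T$ respectively, each $\Gamma_X$ and $\Lambda_X$ is $K$-linear, and the identities $\Lambda_X \circ \Gamma_X = \id$ and $\Gamma_X \circ \Lambda_X = \id$, already verified, show that for each fixed $X$ they are mutually inverse bijections.

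The one ingredient still to supply is naturality in the contravariant variable $X$. Given a morphism $\psi : X \to X'$ in $\Rep \Kpar G$, the left-hand functor sends it to precomposition $H' \mapsto H' \circ \psi$, and the right-hand functor sends it to precomposition with $\psi \otimes_B \id$. I would check the square for $\Gamma$ by evaluating both $\Gamma_X(H' \circ \psi)$ and $\Gamma_{X'}(H') \circ (\psi \otimes_B \id)$ on a generator $x \otimes au_g \# g$: both equal $\Phi_M(1 \otimes au_g \# g)\, H'(\psi(x))(1)$, so the square commutes. The square for $\Lambda$ is handled identically, both sides sending the pair $x$, $a$ to $T'(\psi(x) \otimes au_e \# e)$.

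Finally I would conclude that $\Gamma$ and $\Lambda$ are natural transformations which are mutually inverse at each object, hence mutually inverse natural isomorphisms, yielding the asserted natural isomorphism of functors. The genuine obstacle in this whole argument, namely the well-definedness of $\Gamma_X(H)$ on the tensor product over $B$ and its $(A \times_\alpha G)^e$-equivariance, has already been cleared before the statement, so at this point the proof is essentially bookkeeping together with the purely formal naturality check.
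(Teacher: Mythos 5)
Your proposal is correct and follows essentially the same route as the paper, whose one-line proof simply points back to the preceding computations establishing that $\Gamma_X$ and $\Lambda_X$ are well defined and mutually inverse; you merely make explicit the (purely formal) naturality check in $X$ that the paper leaves implicit.
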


\begin{proof}
The natural transformations $\Delta_X$ and $\Gamma_X$ yield the desired bijections.
\end{proof}

\begin{prop}\label{thm:4.2}
There exists a commutative  diagram of functors 
\[ \xymatrix{
 \Rep (A \times_\alpha G)^e \ar[drr]_{F_1} \ar[rrrr]^F & &  & & \Rep K  \\
& &  \Rep \Kpar G \ar[rru]_{F_2} } \]
where 
$$F(M) = \Hom_{(A \times_\alpha G)^e} ( A \times_\alpha G, M),$$
$$F_1(M) = \Hom_{A^e}(A,M)$$ and $$F_2(X) = \Hom_{\Kpar G}(B, X).$$
\end{prop}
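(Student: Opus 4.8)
The plan is to reduce the statement to the natural isomorphism established in the preceding proposition, specialized to the object $X = B$, together with the elementary identification $B \otimes_B (A \times_\alpha G) \simeq A \times_\alpha G$.

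First I would unwind the composite $F_2 \circ F_1$. By definition,
$$F_2(F_1(M)) = \Hom_{\Kpar G}(B, \Hom_{A^e}(A, M)),$$
so the commutativity $F \simeq F_2 \circ F_1$ of the triangle amounts to a natural isomorphism
$$\Hom_{\Kpar G}(B, \Hom_{A^e}(A, M)) \simeq \Hom_{(A \times_\alpha G)^e}(A \times_\alpha G, M).$$
The previous proposition provides, for every $X \in \Rep \Kpar G$, a natural isomorphism $\Hom_{\Kpar G}(X, \Hom_{A^e}(A,M)) \simeq \Hom_{(A \times_\alpha G)^e}(X \otimes_B (A \times_\alpha G), M)$, realized by the transformations $\Gamma$ and $\Lambda$. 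I would simply evaluate this at $X = B$: the left-hand side becomes $F_2(F_1(M))$, while the right-hand side becomes $\Hom_{(A \times_\alpha G)^e}(B \otimes_B (A \times_\alpha G), M)$.

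It then remains to identify $B \otimes_B (A \times_\alpha G)$ with $A \times_\alpha G$ as an object of $\Rep (A \times_\alpha G)^e$. Recall that $B$ is unital, with unit $e_e = 1$, and that in the construction of $\Delta$ the algebra $B$ acts on $A \times_\alpha G$ on the left through $e_s \mapsto u_s \# e$, while it acts on $X = B$ by multiplication. Since $e_e = 1$ is sent to the unit $u_e \# e$ of $A \times_\alpha G$, the standard unitality map $\psi \colon B \otimes_B (A \times_\alpha G) \to A \times_\alpha G$, $e_s \otimes m \mapsto (u_s \# e)\, m$, with inverse $m \mapsto 1 \otimes m$, is already an isomorphism of left $B$-modules and trivially respects the right $(A \times_\alpha G)$-action, which is carried entirely by the factor $m$.

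The main point to check — and the only step requiring computation — is that $\psi$ intertwines the left $(A \times_\alpha G)$-module structures, since on $B \otimes_B (A \times_\alpha G)$ that structure is the one given by $\Delta$, which twists the first tensor factor by $\phi_B([g])$. Concretely, for $p = au_g \# g$ I would verify
$$\psi\bigl(\Delta(p \otimes 1)(e_s \otimes m)\bigr) = p \cdot \psi(e_s \otimes m).$$
Using $\phi_B([g])(e_s) = [g]\, e_s\, [g^{-1}] = e_{gs} e_g$ and the product formula in $A \times_\alpha G$, both sides reduce to $a u_g u_{gs} \# g$ acting on $m$; this reduction relies precisely on the identity $\alpha_g(u_s u_{g^{-1}}) = u_{gs} u_g$ already recorded in the verification that $\Delta$ is well defined. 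Once $\psi$ is seen to be a bimodule isomorphism, precomposition with $\psi$ yields a natural isomorphism $\Hom_{(A \times_\alpha G)^e}(B \otimes_B (A \times_\alpha G), M) \simeq F(M)$, and composing it with the isomorphism from the previous proposition completes the argument and establishes the commutativity of the triangle.
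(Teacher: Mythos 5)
Your proposal is correct and follows essentially the same route as the paper: specialize the natural isomorphism $\Gamma/\Lambda$ of the preceding proposition to $X=B$ and identify $B\otimes_B(A\times_\alpha G)$ with $A\times_\alpha G$ as an $(A\times_\alpha G)$-bimodule. Your explicit check that the unitality map intertwines the $\Delta$-twisted left action is just a slightly more detailed version of the paper's one-line computation $(au_g\# g)(1\otimes x)(bu_h\# h)=e_g\otimes(au_g\# g)x(bu_h\# h)=1\otimes(au_g\# g)x(bu_h\# h)$.
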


\begin{proof}
We can apply the previous proposition in the particular case of $X=B$, and use the fact that $B \otimes_B (A \times_\alpha G) \simeq A \times_\alpha G$ as $A \times_\alpha G$-bimodules because
$$(au_g \# g) (1 \otimes x) (bu_h \# h) = e_g \otimes (au_g \# g) x (bu_h \# h)= 1 \otimes (au_g \# g) x (bu_h \# h).$$
\end{proof}

From \cite[Theorem 10.47]{R}, in order to finish the proof of Theorem \ref{espectral} we need the following proposition, whose proof will appear after some lemmas.

\begin{prop}\label{thm:4.3} 
The functor $F_2$ is left exact and $F_1 (M)$ is right $F_2$-acyclic for every
injective object $M$ in $\Rep (A \times_\alpha G)^e$.
\end{prop}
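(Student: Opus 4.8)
The plan is to verify separately the two hypotheses of the Grothendieck spectral sequence theorem. Left exactness of $F_2 = \Hom_{\Kpar G}(B,-)$ is automatic, since every covariant $\Hom$ functor is left exact; the real content is the $F_2$-acyclicity of $F_1(M) = \Hom_{A^e}(A,M)$ for injective $M$. Recalling that the right derived functors of $F_2$ are $\Ext^q_{\Kpar G}(B,-) = \H^q_{\mathrm{par}}(G,-)$, this amounts to showing $\Ext^q_{\Kpar G}(B,\Hom_{A^e}(A,M)) = 0$ for every $q \geq 1$ whenever $M$ is injective in $\Rep (A\times_\alpha G)^e$.

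First I would fix a projective resolution $P_\bullet \to B \to 0$ in $\Rep\Kpar G$ and compute $\Ext^q_{\Kpar G}(B,\Hom_{A^e}(A,M))$ as the cohomology of $\Hom_{\Kpar G}(P_\bullet,\Hom_{A^e}(A,M))$. The natural isomorphism established above (naturality in the first variable is precisely what the transformations $\Gamma$ and $\Lambda$ provide) identifies this cochain complex with $\Hom_{(A\times_\alpha G)^e}(P_\bullet \otimes_B (A\times_\alpha G),M)$. Since $M$ is injective, the contravariant functor $\Hom_{(A\times_\alpha G)^e}(-,M)$ is exact and hence commutes with cohomology, so the $q$-th cohomology of this complex equals $\Hom_{(A\times_\alpha G)^e}(H_q(P_\bullet \otimes_B (A\times_\alpha G)),M)$. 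Everything therefore reduces to proving that $P_\bullet \otimes_B (A\times_\alpha G)$ is a resolution of $A\times_\alpha G$, i.e. is acyclic in positive degrees; its $H_0$ is $B\otimes_B(A\times_\alpha G) \cong A\times_\alpha G$ by right exactness of the tensor product and the identification in the proof of Proposition~\ref{thm:4.2}.

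To obtain this acyclicity, the key lemma I would isolate is that $\Kpar G$ is projective, hence flat, as a $B$-module. This follows from the decomposition $\Kpar G = B\times_\beta G = \oplus_{g\in G} D_g\#g$ with $D_g = e_gB$: under the left $B$-action $b\mapsto b\#e$, each summand $D_g\#g$ is isomorphic to $e_gB$, which is a direct summand of $B$ because $e_g$ is idempotent. Consequently every projective $\Kpar G$-module, being a summand of a free one, is flat over $B$. Since $B$ is flat over itself, a dimension-shifting argument in the long exact $\mathrm{Tor}$ sequences then shows that every syzygy $Z_i = \Ker(P_i \to P_{i-1})$ (with $P_{-1}=B$) is also $B$-flat. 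With every $P_i$ and every $Z_i$ flat over $B$, tensoring each short exact sequence $0\to Z_i \to P_i \to Z_{i-1}\to 0$ with $A\times_\alpha G$ over $B$ preserves exactness, and splicing these sequences back together shows that $P_\bullet \otimes_B (A\times_\alpha G)$ is acyclic in positive degrees. Combined with the display of the previous paragraph, this gives $\Ext^q_{\Kpar G}(B,\Hom_{A^e}(A,M)) = 0$ for $q\geq 1$, as required.

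The main obstacle is the careful bookkeeping of the two distinct $B$-module structures in play: the one on $X = P_i$ coming from $B\subseteq \Kpar G$ acting through $\phi_X$, and the one on $A\times_\alpha G$ coming from left multiplication by $u_s\#e$ (the structure implicit in the balancing $\phi_X(e_s)(x)\otimes m = x\otimes\Phi_M(u_s\#e\otimes 1)(m)$ of $\otimes_B$), together with the verification that the projectivity of $\Kpar G$ over $B$ is compatible with these identifications. Once flatness over $B$ is secured, the remainder is the routine syzygy argument; I emphasize that the acyclicity of the complex, and not the projectivity of its terms, is all that the injectivity of $M$ requires in order to force the higher cohomology to vanish.
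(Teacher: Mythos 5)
Your proof is correct, but it takes a genuinely different route from the paper's. Both arguments pivot on the natural isomorphism $\Hom_{\Kpar G}(-,\Hom_{A^e}(A,M))\simeq\Hom_{(A\times_\alpha G)^e}(-\otimes_B(A\times_\alpha G),M)$ together with the exactness of $\Hom_{(A\times_\alpha G)^e}(-,M)$ for injective $M$, but the flatness input is different. The paper proves the much stronger statement that \emph{every} $B$-module is flat: via Lemma \ref{principal} (which rests on Solomon's theorem that the algebra of a finite commutative idempotent semigroup has a basis of orthogonal idempotents) it shows that every finitely generated ideal of $B$ is generated by an idempotent, i.e.\ $B$ is von Neumann regular. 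Hence $-\otimes_B(A\times_\alpha G)$ is an exact functor on all of $\Rep \Kpar G$ (Corollary \ref{injective}), so $\Hom_{\Kpar G}(-,F_1(M))$ is exact and $F_1(M)$ is in fact an \emph{injective} object of $\Rep \Kpar G$, which is stronger than $F_2$-acyclic. You only need the terms and syzygies of one projective resolution $P_\bullet\to B$ to be $B$-flat, and you get this from the elementary observation that $\Kpar G=\oplus_{g}\,e_gB\#g$ with each $e_gB\#g\cong e_gB$ a direct summand of $B$, so projective $\Kpar G$-modules are $B$-projective; the dimension-shifting argument for the syzygies is then routine and correct (in a short exact sequence with flat middle and right terms the left term is flat, and tensoring each syzygy sequence with $A\times_\alpha G$ stays exact). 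Your route buys elementarity --- it bypasses the semilattice combinatorics entirely and, as you rightly emphasize, acyclicity of $P_\bullet\otimes_B(A\times_\alpha G)$ in positive degrees is all that injectivity of $M$ requires. The paper's route buys a cleaner and stronger conclusion (exactness of $-\otimes_B(A\times_\alpha G)$ as a standalone fact, and injectivity of $F_1(M)$) at the cost of the semigroup-algebra lemma. Your closing caveat about the two $B$-module structures is well placed, and your identification of the relevant one (the action of $e_s$ through $\phi_X$ on $X$, matched with left multiplication by $u_s\#e$ on $A\times_\alpha G$) is the correct one for the balanced tensor product used in the paper.
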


\begin{lemma} \label{principal}
 Let $S$ be a commutative semigroup where every element is an idempotent. Let $K$ be a field, let ${\rm K} S$ be the semigroup algebra of $S$. 
 If $I$ is a finitely generated ideal of ${\rm K} S$ then $I$ is principal and is generated by an idempotent of ${\rm K} S$.  
\end{lemma}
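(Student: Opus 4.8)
The plan is to reduce the statement to the representation theory of the semigroup algebra of a \emph{finite} semilattice, and then transfer a generating idempotent back to $KS$. Recall first that a commutative idempotent semigroup is a meet-semilattice: defining $s \le t \Leftrightarrow st = s$ gives a partial order with $st = s \wedge t$. Writing $I = (a_1, \dots, a_n)$, I would let $s_1, \dots, s_m \in S$ be the finitely many elements occurring in the generators $a_1, \dots, a_n$. Because $S$ is commutative and idempotent, every product of the $s_i$ equals $\prod_{i \in A} s_i$ for some nonempty $A \subseteq \{1,\dots,m\}$, so the subsemigroup $T \subseteq S$ generated by them is finite (of size at most $2^m-1$) and all the $a_i$ lie in $KT$.

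The key step, and the one I expect to be the main obstacle, is to show that $KT \cong K^{|T|}$ as $K$-algebras, since this is precisely where the semilattice hypothesis does its work. For each $t \in T$ I would define $\chi_t \colon KT \to K$ on the basis $T$ by $\chi_t(s) = 1$ if $t \le s$ and $\chi_t(s)=0$ otherwise; this is an algebra homomorphism because $\chi_t(ss') = [t \le s\wedge s'] = [t\le s]\,[t\le s'] = \chi_t(s)\chi_t(s')$. Assembling them into $\Phi = (\chi_t)_{t\in T}\colon KT \to K^{|T|}$ gives an algebra map whose matrix in the basis $\{s\}$ and the standard basis is the zeta matrix $[\,t \le s\,]$. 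Ordering $T$ by a linear extension of $\le$ makes this matrix unitriangular, so its determinant is $1$ and $\Phi$ is an isomorphism over any field. (In particular $KT$ is unital, even when $T$ has no identity element.)

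Finally I would produce and transfer the idempotent. In $K^{|T|}$ the ideal $J$ generated by $\Phi(a_1), \dots, \Phi(a_n)$ consists of the tuples supported on $A := \bigcup_i \mathrm{supp}(\Phi(a_i))$; it is generated by $\mathbf{1}_A$, which is moreover the multiplicative identity of $J$, so $\mathbf{1}_A\,\Phi(a_i) = \Phi(a_i)$ for all $i$. Pulling back through $\Phi^{-1}$ yields an idempotent $f \in KT$ with $f \in (a_1,\dots,a_n)_{KT}$ and $f a_i = a_i$ for every $i$. I would then check that this same $f$ generates $I$ inside $KS$: since $f$ is idempotent and $KS$ is commutative, $(f) = fKS$; the inclusion $fKS \subseteq I$ holds because $f \in (a_1,\dots,a_n)_{KT} \subseteq I$, while $I \subseteq fKS$ follows from $a_i = f a_i$, which gives $r a_i = f(r a_i) \in fKS$ for all $r \in KS$ and all scalars. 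Hence $I = (f)$ is principal and generated by an idempotent of $KS$.
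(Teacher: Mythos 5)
Your proof is correct and follows essentially the same route as the paper: reduce to the finite subsemilattice $T$ generated by the elements of $S$ appearing in the generators, split $KT$ into one-dimensional pieces, and take the sum of the primitive idempotents supported by the generators. The only real difference is that where the paper cites Solomon's theorem (and Steinberg) for the existence of a basis of orthogonal idempotents in $KT$, you prove the equivalent fact $KT \cong K^{|T|}$ from scratch via the unitriangularity of the zeta matrix $[\,t \le s\,]$ --- a self-contained substitute for the cited result, with everything else matching.
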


\begin{proof}
Let $I$ be a finitely generated ideal of ${\rm K} S$ and let  $r_1, \ldots, r_m$ be generators of this ideal.
Choose idempotents $u_1, \ldots, u_n$ of $S$ such that each $r_i$ is a combination of these idempotents, and let $T$ be the subsemigroup 
of $S$ generated by $u_1, \ldots, u_n$. $T$ is a commutative semigroup consisting only of idempotents, which 
is the same as a lower semilattice: the associated partial order is given by $u \leq v$ iff $uv = vu = u$,
and the greatest lower bound of $\{u,v\}$ is $uv$. Since $T$ is finite,  \cite[Theorem 1]{S} 
says that ${\rm K} T$ has a basis of orthogonal idempotents $w_1, \ldots, w_N$ (see also \cite[Theorem 4.2]{St}).

Each generator $r_i$ lies in ${\rm K} T$ and therefore we may write $r_i = \sum_{j} \alpha_{i,j} w_j$ for $i =1, \ldots, n$ (with $\alpha_{i,j}$
in $K$). 
Given that
$w_j r_i = \alpha_{i,j} w_j$, the set 
\[
 W = \{w_j ; \alpha_{i,j} \neq 0 \text{ for some } i\}
\]
is contained in $I$. On the other hand, every generator of $I$ is a $K$-linear 
combination of these elements and therefore the ideal generated by $W$ coincides with $I$. 

Finally, the ideal generated by $W$ is the ideal generated by the idempotent $u = \sum_{w_j \in W} w_j \in I$ which acts as an identity for the
elements of $I$. Since the $w_j$'s are mutually orthogonal,
$u w_j= w_j$ for each $j \in I$. Hence, if $y \in I$ then $y = \sum_{w_j \in W} b_jw_j$, with $b_j \in K S$, and therefore
$u y  =\sum_{w_j \in W} b_j (u w_j) = \sum_{w_j \in W} b_j w_j = y.$
\end{proof}

\begin{lemma} 
Every $B$-module $X$ is flat. 
\end{lemma}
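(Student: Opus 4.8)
The plan is to show that $B$ is von Neumann regular (absolutely flat) and then invoke the standard equivalence that over such a ring every module is flat. The first step is to recognize the algebra $B$ as a semigroup algebra to which the preceding Lemma \ref{principal} applies. Recall that $B = \langle e_g \mid g \in G\rangle$, where the $e_g = [g][g^{-1}]$ are commuting idempotents. Let $S$ be the set of all finite products $e_{g_1}\cdots e_{g_n}$ (together with the empty product $e_e = 1$); since the generators commute and each $e_g$ is idempotent, every element of $S$ is again idempotent, so $S$ is a commutative semigroup of idempotents and $B = {\rm K}S$. Thus Lemma \ref{principal} applies verbatim: every finitely generated ideal $I$ of $B$ is principal and generated by an idempotent $u \in B$, that is, $I = uB$.

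Next I would reduce the flatness of an arbitrary $B$-module $X$ to a statement about finitely generated ideals, via the classical ideal criterion for flatness: $X$ is flat if and only if $\mathrm{Tor}_1^{B}(B/I, X) = 0$ for every finitely generated ideal $I \subseteq B$. Given such an $I = uB$ with $u$ idempotent, commutativity of $B$ gives the direct-sum decomposition $B = uB \oplus (1-u)B$, whence $B/I \cong (1-u)B$ is a direct summand of $B$, hence projective. A projective module has vanishing higher $\mathrm{Tor}$, so $\mathrm{Tor}_1^{B}(B/I, X) = 0$ for every $X$, and the criterion then forces every $B$-module $X$ to be flat. (Equivalently, one may phrase the conclusion as: $B$ is von Neumann regular, and over a von Neumann regular ring every module is flat; I expect to cite \cite{R} for whichever formulation is cleaner.)

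The one subtlety worth flagging — and precisely the reason Lemma \ref{principal} is needed rather than an appeal to a finite-dimensional structure theorem for semilattice algebras — is that $B$ may be infinite-dimensional when $G$ is infinite, so one cannot directly split all of $B$ into a finite orthogonal sum of primitive idempotents. Restricting attention to finitely generated ideals, which live inside finitely generated (hence finite) subsemigroup algebras, is exactly what makes the idempotent-generation argument go through. I therefore expect the genuine content to be already discharged by Lemma \ref{principal}, leaving only the homological bookkeeping above; the main obstacle, if any, is simply selecting and correctly invoking the finitely-generated-ideal criterion for flatness.
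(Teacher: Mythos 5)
Your proof is correct and takes essentially the same route as the paper: both reduce flatness to the finitely generated ideal criterion and then invoke Lemma \ref{principal} to write each such ideal as $uB$ for an idempotent $u$. The only (inessential) difference is in the last step, where the paper checks injectivity of $I \otimes_B X \to X$ by the direct computation $\sum_i y_i \otimes x_i = \sum_i u y_i \otimes x_i = u \otimes \sum_i y_i x_i = 0$, while you split $B = uB \oplus (1-u)B$ to see that $B/I$ is projective and conclude $\mathrm{Tor}_1^B(B/I,X)=0$; the two finishes are equivalent via the long exact sequence for $0 \to I \to B \to B/I \to 0$.
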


\begin{proof}   
From \cite [Proposition 3.58]{R}, it is enough to show that for any finitely generated left ideal $I$ of $B$, 
 the morphism  $ I \otimes_B X \to B \otimes _B X \cong X$ is injective.
 By Lemma \ref{principal} and the fact that $B = {\rm K} S$, where $S$ is the commutative semigroup $S = \{e_{g_1}e_{g_2}\cdots e_{g_n}; g_i \in G, n \geq 1\}$,
 we have that each such ideal is principal and is generated by an idempotent $u$. 
 
Now assume that  $ \sum_{i} y_i \otimes x_i \in I \otimes_B X $ is such that
$ \sum_{i} y_i \cdot x_i=0$ in $X$. Since $y_i \in I$ for each $i$ we have $y_i = u y_i$ and therefore
\[
 \sum_{i} y_i \otimes_B  x_i = 
 \sum_{i} u y_i \otimes_B  x_i = 
  u  \otimes_B  (\sum_{i} y_i \cdot x_i) = 0,
\]
and it follows that  $ I \otimes_B X \to B \otimes _B X $ is injective.
\end{proof}

\begin{coro} \label{injective}
The functor $- \otimes_B (A \times_\alpha G):  \Rep \Kpar G   \to   \Rep (A \times_\alpha G)^e $ is exact.
\end{coro}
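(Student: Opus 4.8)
The plan is to obtain this corollary as an essentially immediate consequence of the flatness established in the preceding lemma (every $B$-module is flat). Exactness of a functor between these representation categories means it preserves short exact sequences, and since the forgetful functors to $K$-vector spaces are exact and reflect exactness, it suffices to check everything on underlying spaces. So my strategy is: reduce to a statement about tensoring over $B$, invoke flatness, and then repackage the resulting exact sequence of vector spaces as an exact sequence in $\Rep (A \times_\alpha G)^e$ by means of the $\Delta$-construction.

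First I would observe that every object $X$ in $\Rep \Kpar G$ restricts, along the inclusion $B \hookrightarrow \Kpar G$, to a module over the commutative algebra $B$ (so that left and right $B$-actions coincide and $X \otimes_B (A \times_\alpha G)$ is unambiguous), and that a short exact sequence $0 \to X' \to X \to X'' \to 0$ in $\Rep \Kpar G$ is in particular a short exact sequence of $B$-modules. The functor under consideration is then nothing but the restriction to these $B$-modules of the tensor functor $- \otimes_B (A \times_\alpha G)$, where $A \times_\alpha G$ carries its natural $B$-module structure via $e_g \mapsto u_g \# e$ (this being the image of $e_g = [g][g^{-1}]$ under the map extending $\pi_0$, since $(u_g \# g)(u_{g^{-1}} \# g^{-1}) = u_g \# e$).

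Next, right exactness of $- \otimes_B (A \times_\alpha G)$ is automatic, and by the preceding lemma every $B$-module, in particular $A \times_\alpha G$, is flat; hence the functor is also left exact and therefore exact on $B$-modules. Applying it to the sequence above yields a short exact sequence of $K$-vector spaces
$$0 \to X' \otimes_B (A \times_\alpha G) \to X \otimes_B (A \times_\alpha G) \to X'' \otimes_B (A \times_\alpha G) \to 0.$$
Finally I would invoke the $\Delta$-construction introduced earlier, which makes each $X \otimes_B (A \times_\alpha G)$ into an object of $\Rep (A \times_\alpha G)^e$ functorially in $X$, so that the maps induced on the tensor products are morphisms of $(A \times_\alpha G)^e$-modules; the exact sequence of vector spaces displayed above is thus exact in $\Rep (A \times_\alpha G)^e$.

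I do not anticipate a genuine obstacle: all the hard work lives in the flatness lemma, and the only point demanding care is the bookkeeping of module structures, namely verifying that the $B$-action on $X$ used to form the tensor product is exactly the one balanced against left multiplication by $u_s \# e$ on $A \times_\alpha G$, as in the well-definedness computation for $\Delta$. Once that identification is in place the argument is purely formal.
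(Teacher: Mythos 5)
Your proposal is correct and follows exactly the route the paper intends: the corollary is stated without proof precisely because it is the immediate consequence of the flatness lemma that you describe (flatness of $A \times_\alpha G$ as a $B$-module via $e_g \mapsto u_g \# e$, symmetry of the tensor product over the commutative algebra $B$, and detection of exactness in $\Rep (A \times_\alpha G)^e$ on underlying vector spaces, with the $\Delta$-construction supplying functoriality). Nothing is missing.
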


\medskip

\begin{proof}[Proof of Proposition \ref{thm:4.3}] It is clear that $F_2( - ) = \Hom_{\Kpar G}(B, - )$ is left exact.  If $M$ is an injective object in $\Rep (A \times_\alpha G)^e$, the isomorphism of functors 
$$\Hom_{\Kpar G} ( - , \Hom_{A^e} (A, M)) \simeq \Hom_{(A \times_\alpha G)^e} (- \otimes_B  (A \times_\alpha G), M)$$ and Corollary \ref{injective} 
imply that $\Hom_{\Kpar G} ( - , \Hom_{A^e} (A, M))$ is an exact functor. Hence $\Ext_{\Kpar G}^n(B, F_1(M))=0$ for any $n >0$  and so $F_1(M)$ is  $F_2$-acyclic.
\end{proof}

\end{document}